\newtheorem{theorem}{Theorem}[section]
\newtheorem{proposition}[theorem]{Proposition}
\newtheorem{lemma}[theorem]{Lemma}
\newtheorem{corollary}[theorem]{Corollary}
\newtheorem{definition}[theorem]{Definition}
\newtheorem{example}[theorem]{Example}
\newtheorem{remark}[theorem]{Remark}
\begin{document}

\title{Constant angle null hypersurfaces}

\author{Samuel Chable-Naal, Matias Navarro, Didier A. Solis}

\date{\today}

\maketitle

\abstract{In this work we introduce the notion of constant angle null hypersurface of a Lorentzian manifold  with respect to a given ambient vector field. We analyze the case in which the vector field is closed and conformal, thus finding that such null hypersurfaces have a canonical principal direction. We further provide some classification results for constant angle null surfaces with vanishing null mean curvature.
}

\medskip

\textbf{Keywords:} \textit{Null hypersurfaces, closed conformal vector fields, constant angle hypersurfaces}

\medskip

\textbf{MSC 2020 Classification:} \textit{53B30, 53A05, 53A10}

\section{Introduction}\label{sec1}

The classification of submanifolds subject to certain geometric restrictions has been one of the main driving forces in semi-Riemannian geometry. The characterization of totally geodesic, totally umbilical, minimal or isoparametric submanifolds of specific classes of ambient spaces are active areas of research up to this day (for detailed accounts refer to \cite{CecilRyan,chen00,DajczerTojeiro}). Historically, among the first such objects to be explored are the so-called constant angle submanifolds, its study dating back to the works of J. Bernoulli  on spirals and rhumb lines \cite{Bernoulli}. 

In the Riemannian case, given an oriented immersed hypersurface $M$ of a Riemannian manifold $(\bar{M},\bar{g})$ and a distinguished vector field $V\in\Gamma (T\bar{M})$, $V\neq 0$, the \textit{angle} $\theta$ of $M$ with respect to $V$ is defined by 
\begin{equation}\label{eq:angle}
\cos \theta =  \frac{\bar{g} (V,\mathbf{n})}{\vert V\vert},
\end{equation}
 where $\mathbf{n}$ is a unit normal vector field to $M$. Thus, a constant angle hypersurface is characterized by the constancy of $\theta$. This notion can be straightforwardly generalized to the semi-Riemannian context  by requiring the right-hand size in Eq. \eqref{eq:angle}  to be constant along $M$. Notice that this latter expression carries a distinct geometrical meaning in some specific settings. For instance, in the Lorentzian case, it can be interpreted in terms of a hyperbolic cosine, when $M$ is spacelike and $V$ is timelike \cite{MR719023}.  From the geometrical point of view, the most meaningful properties arise in the cases in which the vector field $V$ carries some relevant geometrical information. Classical examples of such fields include those that arise as infinitesimal generators of  geometrical transformations like isometries (Killing fields) or conformal maps (closed conformal or concircular fields). It is worthwhile mentioning that the latter class include parallel, radial and gradient vector fields \cite{KUHNEL}.

Several classification results for constant angle semi-Riemannian (non-degenerate) hypersurfaces have been established recently in  ambient spaces such as cartesian products \cite{Dchen,Dillen01,Dillen02,FuNistor}, warped products \cite{Dillen00,MR3013428}, spaceforms \cite{FuYang,Manfio,MunteanuNistor02,NRS01} and other geometrically relevant spaces \cite{aguilar,Onnis01,Onnis02}. One remarkable relation between the distinguished vector field $V$ and the geometry of a constant angle hypersurface $M$ is encapsulated in the notion of canonical principal direction: the tangent component $V^{\top}\in\Gamma (TM)$ is a principal direction of $M$ \cite{chen01,GG,MR2966642,MunteanuNistor01}. 

We notice that the above discussion deals exclusively with non-degenerate hypersurfaces. Nevertheless, some of the most studied submanifolds in Lorentzian geometry and mathematical relativity ---such as event and Killing horizons--- are null, that is, degenerate \cite{MR1172768,MR0424186,MR757180}. In spite of its  relevance in theoretical physics, a framework for the study of null submanifolds in a spirit close to the classical Riemannian submanifold theory was missing until the 1980's. The main difficulty to be surmounted in this effort lies in the fact that in the realm of Lorentzian geometry the vector fields orthogonal to a null hypersurface $M\subset \bar{M}$ are also tangent to it. Hence, there is no canonical orthogonal splitting of $T\bar{M}$ into normal and tangent components. In \cite{MR1383318} K. Duggal and A. Bejancu  established the foundations for a null submanifold theory based on the choice of an additional structure, the so called screen distribution $S(TM)$. A considerable amount of research has developed following this approach  (see for instance \cite{MR2598375} and references therein). In particular, the study of null hypersurfaces in generalized Robertson-Walker spacetimes is an active area of research to this day \cite{Gelocor}.

\section{Preliminaries}\label{sec2}

Following \cite{MR1383318,MR2598375}, we denote by $(\bar M,\bar g)$  an $(n+2)$-dimensional Lorentzian manifold with metric $\bar g$ and by $M$  an $(n+1)$-manifold immersed in $\bar M$ with degenerate induced metric $g$. Equivalently, there exists a vector field $\xi\ne 0$ tangent to $M$ such that
\[
g(\xi,X)=0,\quad\mathrm{for\ all\ } X\in\Gamma(TM).
\]
Since $\dim T_pM+\dim T_pM^\perp=\dim T_p\bar M$,  the {\em radical space} $\mathrm{rad}(T_pM)=T_pM\cap T_pM^\perp$ at each $p\in M$ satisfies 
\begin{equation*}
\mathrm{rad}(T_pM)=span (\xi_p).
\end{equation*}
The {\em radical bundle} $\mathrm{rad}(TM)$ is given by
\[
\mathrm{rad}(TM)=\bigcup\limits_{p\in M}\mathrm{rad}(T_pM).
\]

A {\em screen distribution} on $M$ is a vector sub-bundle $S(TM)$ of $TM$ such that 
\begin{equation}\label{eq:descomposicion1}
TM=S(TM)\oplus_{\mathrm{orth}}  \mathrm{rad}(TM),
\end{equation}
thus, $S(TM)$ is complementary to $\mathrm{rad}(TM)$ in $TM$.

Notice that due to the Lorentzian character of $\bar{g}$, the metric restricted to $S(TM)$ is positive definite. Intuitively, an adequate choice of $S(TM)$ would enable us to relate the geometry of $(M,g)$ to a more familiar (Riemannian) geometry in $S(TM)$. Henceforth, by a {\em null hypersurface} we mean a triple $(M,g,S(TM))$.

The last main ingredient in this framework is the so called {\em transversal bundle} $\mathrm{tr}(TM)$, which is the unique rank $1$ vector bundle with the following property: given $\xi\in \Gamma (\mathrm{rad}(TM))$ there exists a unique $N\in\Gamma (\mathrm{tr}(TM))$ such that 
\begin{equation}\label{eq:N}
\bar g(\xi,N)=1,\quad \bar g(N,N)=\bar g(N,X)=0, \quad\mathrm{for\ all\ }X\in\Gamma(S(TM)).
\end{equation}
Thus, we can split the tangent bundle of the ambient manifold as
\begin{equation}\label{eq:descomposicion2}
T\bar M=TM\oplus\mathrm{tr}(TM).
\end{equation}

\begin{remark}\label{rem:gauge}
Notice that even though a choice of screen distribution $S(TM)$ determines the transversal bundle 
$\mathrm{tr}(M)$, there is an scaling gauge for the sections $\xi$ and $N$ satisfying \eqref{eq:N}. Indeed, if the smooth function $f:M\to \mathbb{R}$ never vanishes, then the smooth sections
\begin{equation}\label{eq:gauge}
\xi^\prime=f\xi,\quad N^\prime=\frac{1}{f}N
\end{equation}
also satisfy  \eqref{eq:N}.
\end{remark}

\begin{remark}\label{rem:stm}
Let us consider a vector field $\zeta$ transversal to $M$ and $\xi\in\Gamma (\mathrm{rad}(TM))$. Then the vector field $N_\zeta$ given by
\[
N_\zeta=\frac{1}{\bar{g}(\xi ,\zeta)}\left(\zeta-\frac{\bar{g}(\zeta, \zeta)}{2\bar{g}(\xi, \zeta)}\xi \right)
\]
satisfies $\bar{g}(N_\zeta,N_\zeta)=0$ and $\bar{g}(N_\zeta,\xi)=1$. Therefore, $\zeta $ induces a screen distribution $S_\zeta (TM)=span (\xi,N_\zeta)^\perp$ (refer to Eq. 2.1.7 in \cite{MR2598375})\footnote{Note that in this case $\zeta\in \Gamma (S_\zeta (TM)^\perp)$.}. The above  construction is key in the rigging approach to null hypersurface theory \cite{Gelocor2}.
\end{remark}

In virtue of decompositions \eqref{eq:descomposicion1} and \eqref{eq:descomposicion2} we can introduce the notions of induced connections, shape operators and second fundamental forms, as well as the fundamental equations relating them. 

Let $\bar\nabla$ be the Levi-Civita connection  of $(\bar{M},\bar{g})$ and $V\in\Gamma(\mathrm{tr}(TM))$. Given $X,Y\in\Gamma(TM)$, we can  write the Gauss and Weingarten formulae related to decomposition \eqref{eq:descomposicion2} as 
\begin{equation}\label{eq:GW1}
\begin{aligned}
\bar\nabla_XY&=\nabla_XY+h(X,Y),\\
\bar\nabla_XV&=-A_VX+\nabla_X^tV,
\end{aligned}
\end{equation}
where $\nabla_XY, A_VX\in\Gamma(TM)$,  and $h(X,Y),\nabla_X^tV\in\Gamma(\mathrm{tr}(TM))$. The above induces linear connections $\nabla$ and $\nabla^t$ on $TM$ and $\mathrm{tr}(TM)$, respectively. Notice however that $\nabla$ is not a metric connection, though it is torsion-free. Further, $A_V:\Gamma (TM)\to\Gamma (TM)$ and $h$ is a symmetric section of $\mathrm{Hom}^2(TM,TM;\mathrm{tr}(TM))$ that satisfies 
\begin{equation}\label{eq:nonmetricconnection}
(\nabla_X g)(Y,Z)= \bar g (h(X,Y),Z) + \bar g (Y,h(X,Z)).
\end{equation}
for all  $X,Y,Z\in\Gamma(TM)$. $\nabla$, $A_V$ and $h$ are called the {\em induced connection}, {\em shape operator} and {\em second fundamental form of} $M$, respectively.

On the other hand, relative to decomposition \eqref{eq:descomposicion1} we have another set of Gauss-Weingarten formulae for $TM$:
\begin{equation}\label{eq:GW2}
\begin{aligned}
\nabla_XPY&=\nabla_X^*PY+h^*(X,PY), \\
\nabla_XU & =-A_U^*X+\nabla_X^{*t}U,
\end{aligned}
\end{equation}
where $U\in\Gamma(\mathrm{rad}(TM))$, $P$ denotes the projection of $TM$ onto $S(TM)$; $\nabla^*$ and $\nabla^{*t}$ are connections on $S(TM)$ and $\mathrm{rad}(TM)$, respectively. Here $A_U^*:\Gamma (TM)\to \Gamma (S(TM))$ and $h^*\in\mathrm{Hom}^2(TM,S(TM),\mathrm{rad}(TM))$ are the {\em screen shape operator} and {\em screen second fundamental form} of $S(TM)$, respectively. A straightforward computation shows that $\nabla^*$ is a metric connection with respect to $\bar{g}\vert_{S(TM)}$.

For further reference, we cite some of the standard results pertaining null hypersurfaces that will be used throughout this work. For complete proofs, refer to Proposition 2.1.2 and 2.2.6 in \cite{MR1383318} and \cite{MR2598375}, respectively.

\begin{proposition}\label{prop:general}
Let $(M,g,S(TM))$ be a null hypersurface, $U\in\Gamma(\mathrm{rad}(TM))$ and  $V\in\Gamma(\mathrm{tr}(TM))$. Then
\begin{enumerate}
\item $A_V$ is $S(TM)$ valued.
\item $A^*_UU=0$.
\item $A^*_\xi$ is symmetric with respect to $g$.
\item  The following are equivalent:
\begin{itemize}
\item $S(TM)$ is an integrable distribution.
\item $h^*$ is symmetric on $S(TM)$.
\item $A_V$ is symmetric respect to $g\vert_{S(TM)}$.
\end{itemize}
\end{enumerate}
\end{proposition}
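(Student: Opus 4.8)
The plan is to prove each of the four items by unpacking the defining equations \eqref{eq:GW1} and \eqref{eq:GW2} together with the orthogonality relations \eqref{eq:N} and the non-metricity identity \eqref{eq:nonmetricconnection}, differentiating them judiciously and reading off the resulting algebraic constraints.

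For item (1), I would start from the Weingarten formula $\bar\nabla_X V=-A_VX+\nabla^t_XV$ with $V\in\Gamma(\mathrm{tr}(TM))$, and show that $A_VX$ has no radical component, i.e.\ $\bar g(A_VX,N)=0$ for the canonical $N$. The idea is to differentiate $\bar g(V,N)$ (a constant, up to the gauge) or rather to pair the Weingarten equation with $N$ and use $\bar g(N,N)=0$; since $A_VX\in\Gamma(TM)$ and the $\mathrm{rad}$-component of a tangent vector is detected by pairing with $N$, the relation $\bar g(\xi,V)$-type identities force the $\xi$-component to vanish, leaving $A_VX\in\Gamma(S(TM))$.

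For item (2), the cleanest route is to apply the second equation of \eqref{eq:GW2} with $U=\xi$ to the vector $\xi$ itself: $\nabla_\xi\xi=-A^*_\xi\xi+\nabla^{*t}_\xi\xi$, and then pair with $\xi$. Since $A^*_\xi\xi\in\Gamma(S(TM))$ and $g(\xi,\cdot)=0$ on all of $TM$, I expect $A^*_\xi\xi$ to be forced into $\mathrm{rad}(TM)\cap S(TM)=\{0\}$; the main point is that $A^*_\xi\xi$ is simultaneously screen-valued and radical, hence zero. For item (3), symmetry of $A^*_\xi$ with respect to $g$ amounts to showing $g(A^*_\xi X,Y)=g(A^*_\xi Y,X)$; I would express $g(A^*_\xi X,PY)$ via \eqref{eq:GW2} as (essentially) $-\bar g(\nabla_X\xi,PY)=\bar g(h^*(X,PY),\cdot)$-type terms, and use that $h^*(X,PY)=\bar g(A^*_\xi X,PY)$ gives a bilinear form whose symmetry follows from the torsion-freeness of $\nabla$ applied to $\bar g(\nabla_X\xi,Y)-\bar g(\nabla_Y\xi,X)=\bar g(\xi,[X,Y])=0$, since $\xi$ is orthogonal to $TM$.

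The substance of the proposition is item (4), and this is where I expect the real work to lie. The equivalences link integrability of $S(TM)$, symmetry of $h^*$, and symmetry of $A_V$. For the first equivalence I would compute $\bar g([PX,PY],N)$ and rewrite it using the Gauss formula \eqref{eq:GW2}, obtaining $h^*(X,PY)-h^*(Y,PX)$ (viewed through pairing with the fixed $N$), so that involutivity of $S(TM)$---i.e.\ $[PX,PY]\in\Gamma(S(TM))$, equivalently $\bar g([PX,PY],N)=0$---is equivalent to the symmetry of $h^*$. For the link between $h^*$ and $A_V$, the key relation is an identity of the form $\bar g(h^*(X,PY),N)=\bar g(A_VX,PY)$ connecting the screen second fundamental form to the shape operator in the transversal direction; once this is in hand, symmetry of one transfers to the other. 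The main obstacle is bookkeeping: keeping the three connections $\nabla,\nabla^*,\nabla^{*t}$ and their transversal counterparts straight, and correctly identifying which components survive after projecting and pairing with $\xi$ versus $N$. I would organize the computation by always pairing the relevant Gauss--Weingarten identity with the single distinguished section ($N$ for detecting radical components, $\xi$ for detecting transversal ones) so that the non-metricity term in \eqref{eq:nonmetricconnection} is controlled and the symmetry statements reduce to the vanishing of $\bar g(\xi,[X,Y])$ and $\bar g(N,[PX,PY])$.
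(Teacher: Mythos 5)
The paper itself gives no proof of this proposition: it is quoted from the literature, with the reader referred to Proposition 2.1.2 of \cite{MR1383318} and Proposition 2.2.6 of \cite{MR2598375}. Measured against those standard arguments, your outlines for items (1), (3) and (4) are essentially correct: pairing the Weingarten formula with $N$ and using $\bar g(N,N)=0$ gives (1); symmetry of the form $(X,Y)\mapsto \bar g(\bar\nabla_X\xi,Y)$, which follows from $\bar g(\xi,[X,Y])=0$, gives (3); and for (4), the computation $\bar g([PX,PY],N)=C(PX,PY)-C(PY,PX)$ together with the identity $C(X,PY)=g(PY,A_NX)$ of Eq.~\eqref{eq:relacionAh} yields both equivalences (with $A_V=fA_N$ handling a general transversal section). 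One slip in (3): the identity you write, $h^*(X,PY)=\bar g(A^*_\xi X,PY)$, confuses the two second fundamental forms --- it is $B$ (i.e.\ $h$) that pairs with $A^*_\xi$, while $C$ (i.e.\ $h^*$) pairs with $A_N$ --- but the mechanism you then invoke is the right one.

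Item (2), however, has a genuine gap. You propose to pair $\nabla_\xi\xi=-A^*_\xi\xi+\nabla^{*t}_\xi\xi$ with $\xi$ and conclude that $A^*_\xi\xi$ is ``simultaneously screen-valued and radical.'' But every term in that equation is tangent to $M$, and $g(\,\cdot\,,\xi)\equiv 0$ on all of $\Gamma(TM)$, so pairing with $\xi$ produces the tautology $0=0$; it cannot force anything into the radical. To show $A^*_\xi\xi\in\Gamma(\mathrm{rad}(TM))$ you must show $g(A^*_\xi\xi,Y)=0$ for every \emph{screen} vector $Y$, and this requires exactly the ingredients your sketch never invokes: the relation $B(X,Y)=g(Y,A^*_\xi X)$ of Eq.~\eqref{eq:relacionAh}, the vanishing $B(X,\xi)=0$ of Eq.~\eqref{eq:bcero}, and the symmetry of $B$ (coming from torsion-freeness). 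These give $g(A^*_\xi\xi,Y)=B(\xi,Y)=B(Y,\xi)=0$, and then $A^*_\xi\xi=0$ because it is screen-valued and $g\vert_{S(TM)}$ is positive definite (equivalently, $S(TM)\cap\mathrm{rad}(TM)=0$); the case of a general $U=f\xi$ follows by tensoriality of $A^*$. Alternatively, pair the equation with screen $Y$ and use the pregeodesic property $\bar g(\bar\nabla_\xi\xi,Y)=-\bar g(\xi,[\xi,Y])=0$. Either way, that missing step is the entire content of item (2), so as written your plan for it would fail.
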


For $Y\in \Gamma (T\bar{M})$, applying decompositions \eqref{eq:descomposicion1} and \eqref{eq:descomposicion2} leads to 
\begin{equation}\label{eq:ZXN}
Y = \overset { * } { Y } + Y _ \xi + Y _ N 
\end{equation}
where 
\[
Y _ \xi = \overline { g } ( Y , N ) \xi ,\quad   Y_ N = \overline { g } ( Y , \xi ) N
\]
and $ \overset { * } { Z } $ is the projection of $Y$ on $ S ( T M ) $. We now establish the basic relations arising from coupling the above relation to the Gauss and Weingarten formulae when $X\in\Gamma (S(TM))$.
\begin{eqnarray*}
\bar{\nabla}_XY &=& \bar{\nabla}_X(\overset{*}{Y}+Y_\xi )+\bar{\nabla}_XY_N\\
&=& \nabla_X(\overset{*}{Y}+Y_\xi )+h(X, \overset{*}{Y}+Y_\xi )-A_{Y_N}X+\nabla_X^tY_N\\
&=& 
 \nabla_X\overset{*}{Y}+\nabla_X Y_\xi +h(X, \overset{*}{Y}+Y_\xi )-A_{Y_N}X+\nabla_X^tY_N\\
&=& \overset{*}{\nabla}_X\overset{*}{Y}+\overset{*}{h}(X,\overset{*}{Y})-A_{Y_\xi}^*X+\nabla_X^{*t}Y_\xi+h(X, \overset{*}{Y} )-A_{Y_N}X+\nabla_X^tY_N.
\end{eqnarray*}
In summary
\begin{equation}\label{eq:components}
\bar{\nabla}_XY = \underbrace{\overset{*}{\nabla}_X\overset{*}{Y}-A_{Y_\xi}^*X-A_{Y_N}X}_{\in\Gamma (S(TM))}+\underbrace{\overset{*}{h}(X,\overset{*}{Y})+\nabla_X^{*t}Y_\xi }_{\in\Gamma (\mathrm{rad}(TM))}+\underbrace{h(X, \overset{*}{Y})+\nabla_X^tY_N}_{\in\Gamma (\mathrm{tr}(TM))}
\end{equation}

\begin{remark}\label{rem:LC}
Notice that when $S(TM)$ is integrable their leaves form a foliation of $M$ by Riemannian manifolds $(\widehat{M},\hat{g})$. As a consequence of Frobenius theorem, in such cases $\overset{*}{\nabla}$ is torsion free and hence it coincides with the Levi-Civita connection $\widehat{\nabla}$ of each leaf $(\widehat{M},\widehat{g})$. 
Moreover, consider the Gauss-Weingarten formulae associated to decomposition $T\bar{M}=S(TM)\oplus S(TM)^\perp$, that is,
\begin{equation}\label{eq:codim2}
	\bar{ \nabla } _ X Y  = \bar{ \nabla } _ X ( \overset { * } { Y } + Y ^ \perp ) 
	 = \widehat { \nabla } _ X \overset { * } { Y } + \alpha ( X , \overset { * } { Y } ) - \widehat { A } _ { Y ^ \perp }  X  + \nabla _ X ^ \perp Y ^ \perp ,
	\end{equation}
where $\alpha$, $\widehat { A } _ { Y ^ \perp }$, $\nabla _ X ^ \perp Y ^ \perp$ are the second fundamental form, shape operator and normal connections of $\widehat{M}$ as a codimension two submanifold immersed in $\bar{M}$ (see \cite{DajczerTojeiro}). Thus, comparing the $S(TM)$ components in Eqs. \eqref{eq:components} and \eqref{eq:codim2} leads to 
\begin{equation}\label{eq:shapeops}
\widehat { A } _ { Y ^ \perp }  X =A_{Y_\xi} X+A_{Y_N}X, \qquad X\in\Gamma (S(TM)).
\end{equation}
\end{remark}

Let us now consider a null frame $\{\xi , N\}$ as in Eq. \eqref{eq:N} and denote by $B$, $C$  the bilinear forms associated to $h$ and $h^*$, respectively. That is 
\begin{equation}\label{eq:h&h}
h(X,Y) = B(X,Y)N, \qquad  h^*(X,PY) = C(X,PY)\xi . 
\end{equation}
Thus, in virtue of equations \eqref{eq:GW1} and  \eqref{eq:GW2}   we have
\begin{equation}\label{eq:relacionAh}
\begin{aligned}
B(X,Y) &=\bar{g}(h(X,Y),\xi)= g(Y,A_\xi^*X),\\ 
C(X,Y)&= \bar{g} (h^*(X,PY),N) =g(PY,A_NX),
\end{aligned}
\end{equation}
for all $X,Y\in\Gamma (TM)$. 
It then follows 
that
\begin{equation}\label{eq:bcero}
B(X,\xi )=0\quad \forall X\in\Gamma (TM).
\end{equation}
Notice  that since $h$ is symmetric, then by Eq. \eqref{eq:relacionAh} $A_N$ is symmetric with respect to $g$. Hence Proposition \ref{prop:general} yields that  the same is true for $A_\xi$ provided $S(TM)$ is integrable.

The {\em null mean curvature} $H$ is the trace of the shape operator $A^*_\xi$. Due to Eq. \eqref{eq:bcero} we have
\begin{equation*}
H=\sum_{i=1}^nB(e_i,e_i),
\end{equation*}
where $\{e_1,\ldots ,e_n\}$ denotes an orthonormal frame field in $S(TM)$.

Furthermore, note that $\bar{g}(\nabla^t_XN,\xi )=-\bar{g}(\nabla_X^{*t}\xi ,N)$. Hence, in terms of the one form $\tau$ defined by
\begin{equation}\label{eq:tau}
\tau (X)= \bar{g}(\nabla^t_XN,\xi )
\end{equation}
the Weingarten formulae read as
\begin{equation}\label{eq:GWapp}
\bar\nabla_XV=-A_VX+\tau (X)V,\quad \nabla_XU =-A_U^*X-\tau (X)U .
\end{equation}

\begin{remark}\label{rem:tauzero}
Notice that when $\tau \equiv 0$ Eq. \eqref{eq:GWapp} looks just like the standard semi-Riemannian (non-degenerate) Weingarten  formula for a unit normal vector field. In that sense, the vanishing of $\tau$ can be interpreted as a normalization that ensures the closest possible analogue to the classical submanifold theory. As it turns out, a choice of $\{\xi ,N\}$ that yields $\tau\equiv 0$ is always possible if the  Ricci tensor $Ric^\nabla$  vanishes (see Prop. 2.4.2 in \cite{MR2598375}). 
\end{remark}

The curvature operators $\bar{R}$, $R$, $R^*$ associated to the linear connections $\bar{\nabla}$, $\nabla$, $\nabla^*$ are intertwined in a series of fundamental equations that resemble the standard Gauss-Codazzi structure equations in the non-degenerate case. We only present one of them, that will be used in Theorem \ref{theoCMC} and refer to it as the {\em Codazzi equation of} $M$. For all $X,Y,Z\in\Gamma (TM)$ and $U\in\Gamma (\textrm{rad}(TM))$ we have
\[
\bar{g}(\bar{R}(X,Y)Z, U) =\bar{g}((\nabla_Xh)(Y,Z)- (\nabla_Yh)(X,Z),U) 
\]
Thus, once the null fields $\{\xi , N\}$ are fixed,
\begin{equation}\label{eq:Codazzi}
\bar{g}(\bar{R}(X,Y)Z, \xi) =(\nabla_XB)(Y,Z)- (\nabla_YB)(X,Z)+\tau (X)B(Y,Z)-\tau (Y)B(X,Z)
\end{equation}
where Eq. \ref{eq:nonmetricconnection} translates to
\[
(\nabla_XB)(Y,Z)=X\cdot B(Y,Z)-B(\nabla_XY,Z)-B(Y,\nabla_XZ).
\]

Many of the most significant geometrical restrictions widely studied in the semi-Riemannian (non degenerate) theory of submanifolds have analogues in the null setting. For instance, $M$ is {\em totally umbilical} (resp. {\em totally geodesic}) if there exists a smooth function $\lambda$ such that $h(X,Y)=\lambda g(X,Y)$ (resp. $h\equiv 0$). Similarly, $S(TM)$ is {\em totally umbilical} (resp. {\em totally geodesic}) if there exists a smooth function $\lambda$ such that $h^*(X,Y)=\lambda g(X,Y)$ (resp. $h^*\equiv 0$). In terms of the shape operators, we have that $M$ is totally umbilical (resp. totally geodesic) if $A_\xi^* PX = \lambda PX$ ($A_\xi^*\equiv 0$) and likewise, $S(TM)$ is {\em totally umbilical} (resp. {\em totally geodesic}) if $A_NX=\lambda PX$ ($A_N\equiv 0$) for all $X\in\Gamma (TM)$. 
Notice, that all these conditions, as well as the vanishing of the null mean curvature ($H\equiv 0$) do not depend on the choice of the null frame $\{\xi ,N\}$.

As an illustrative example, let us focus on the case of {\em generalized Robertson-Walker} spacetimes, or GRW spacetimes for short. Let us recall that they are defined as Lorentzian warped products of the form $\bar{M}=-I\times_{\varrho} F$, with $(F,g_F)$ a Riemannian manifold and $\varrho$ a positive smooth function defined on the real interval $I$. Thus we have
\begin{equation}\label{eq:GRW}
\bar g=-\sigma^*(dt^2)+(\varrho\circ\pi)^2 \pi^*(g_F).
\end{equation}
where $\sigma$ and $\pi$ are the projections of $\bar M$ over $I$ and $F$, respectively. If the fiber $F$ is a Riemannian manifold of constant sectional curvature suitable choices of the warping function deliver open sets of all Lorentzian spaceforms as follows:
\begin{itemize}
\item Lorentz-Minkowski space: $\mathbb L^{n+2}=-\mathbb R\times_{\text{id}}\mathbb R^{n+1}$.
\item de Sitter space: $\mathbb S_1^{n+2}=-\mathbb R\times_{\cosh}\mathbb S^{n+1}$.
\item Anti de Sitter space $\mathbb H_1^{n+2}=-\left(-\frac{\pi}{2},\frac{\pi}{2}\right)\times_{\cos}\mathbb H^{n+1}$.
\end{itemize}

\begin{example}\label{ex:GRW}
Let $f:F\to\mathbb R$  be a {\em transnormal function}, that is, a smooth function such that $\vert \text{grad}\,f\vert =\varrho\circ f$. Thus the graph $M=\{\,(f(p),p)\,\vert\,p\in F\,\}$ is a degenerate hypersurface in the GRW spacetime $\bar M=-I\times_{\varrho} F$. Further, let $S^*(TM)$  be the family of  tangent bundles of the level hypersurfaces $S_t=M\cap\left(\{t\}\times F\right)$. Consider now $\xi\in \Gamma (\mathrm{rad}(TM))$ and $N\in \Gamma (\mathrm{tr}(TM))$ given by
\begin{equation*}
\xi=\frac{1}{\sqrt{2}}\left(\partial_t+\frac{1}{(\varrho\circ f)^2} \overline{\text{grad} f}\right),\quad N=\frac{1}{\sqrt{2}}\left(-\partial_t+\frac{1}{(\varrho\circ f)^2} \overline{\text{grad} f}\right) ,
\end{equation*}
where $\overline{\text{grad} f}$ denotes the lift of $\text{grad} f$ to $\Gamma (T\bar{M})$. In this setting we have that the shape operators of $(\bar{M},g,S^*(TM))$ satisfy
\begin{equation}\label{eq:sqcRW}
\frac{1}{\sqrt{2}}(A_N-A_\xi^*)=\frac{\varrho'}{\varrho}P.
\end{equation}
\end{example}

The above example serves as motivation for the concept of screen quasi-conformal hypersurface, which will be studied in the next section. For a detailed account, see \cite{MR3874677,MR4135826}.

\subsection{Closed conformal vector fields}

A property that characterizes GRW spacetimes relates to the notion of closed conformal vector field.  Let us recall that $Z\in\Gamma (T\bar{M})$ is {\em closed conformal} ---or CC for short--- if there exists a smooth function $\varphi :\bar{M}\to\mathbb{R}$ such that
\begin{equation}\label{eq:ccvf} 
\bar{\nabla}_X{Z}=\varphi X
\end{equation}
for all $X\in\Gamma (T\bar{M})$.  Indeed, GRW spacetimes can also be described as those Lorentzian manifolds having a timelike CC vector field \cite{chen04}. Throughout this work, $\varphi$ will always denote the function associated to a CC vector field.

In the case of semi-Riemannian spaceforms, when viewed as hyperquadrics immersed in a semi-Euclidean space, CC vector fields arise as projections of parallel vector fields defined on the respective semi-Euclidean space \cite{NRS01,GG}. Notice that according to Eq. \eqref{eq:GRW}, the vector field $Z=\varrho \partial_t$ of Example \ref{ex:GRW} is closed conformal. 

Other  important classes of CC vector fields include {\em parallel} ($\varphi \equiv 0$), {\em homothetic} ($\varphi$ is a constant function) and {\em radial}, that is, when $\bar{M}$ is a semi-Euclidean space and $Z(x)=\varphi x+Z_0$, with $Z_0$ a parallel vector field. 

One of the most remarkable features of CC vector fields is that their length $\vert Z\vert  =\sqrt{\vert \bar{g}(Z,Z) \vert}$ is constant (if  not null) along directions orthogonal to them (see Lemma 2.7 in \cite{NRS01}).

\begin{lemma}\label{prop:ccc}
Let $Z\in \Gamma (T\bar{M})$ be a CC vector field and $X\in \Gamma (T\bar{M})$ such that $\bar{g}(Z,X)=0$. If $\bar{g}(Z,Z)$ never vanishes, then $X\cdot\vert Z\vert =0$.
\end{lemma}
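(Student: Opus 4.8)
The plan is to reduce the statement about the length $\vert Z\vert$ to a statement about $\bar g(Z,Z)$, differentiate the latter along $X$, and then invoke the two hypotheses (the CC condition and $\bar g(Z,X)=0$). Since $\bar\nabla$ is the Levi-Civita connection of $\bar g$, it is metric, so the natural first move is to compute the directional derivative of the squared length rather than the length itself, avoiding the square root until the very end.

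Concretely, I would start from metric compatibility to write
\[
X\cdot\bar g(Z,Z)=2\,\bar g(\bar\nabla_X Z,Z).
\]
Now the closed conformal hypothesis \eqref{eq:ccvf} gives $\bar\nabla_X Z=\varphi X$, so the right-hand side becomes $2\varphi\,\bar g(X,Z)$, which vanishes by the orthogonality assumption $\bar g(Z,X)=0$. Hence $X\cdot\bar g(Z,Z)=0$; that is, the squared length is already constant in the direction $X$. This is the entire computational content, and it is short.

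It remains to pass from $\bar g(Z,Z)$ to $\vert Z\vert=\sqrt{\vert\bar g(Z,Z)\vert}$, and this is the only point requiring a little care, so I expect it to be the \emph{main} (though still mild) obstacle. Because $\bar g(Z,Z)$ never vanishes, it has constant sign on each connected component; writing $\varepsilon\in\{-1,+1\}$ for that (locally constant) sign we have $\vert Z\vert^2=\varepsilon\,\bar g(Z,Z)$, and differentiating along $X$ yields
\[
2\,\vert Z\vert\,\bigl(X\cdot\vert Z\vert\bigr)=X\cdot\vert Z\vert^2=\varepsilon\,\bigl(X\cdot\bar g(Z,Z)\bigr)=0.
\]
Finally, the non-vanishing of $\bar g(Z,Z)$ also guarantees $\vert Z\vert\neq 0$, so we may divide by $2\vert Z\vert$ to conclude $X\cdot\vert Z\vert=0$, as claimed. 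The subtlety to flag in the write-up is precisely that the hypothesis ``$\bar g(Z,Z)$ never vanishes'' is used twice: once to make the sign $\varepsilon$ well defined and locally constant, and once to license the final division.
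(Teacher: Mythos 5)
Your proposal is correct and follows exactly the paper's argument: the same one-line computation $X\cdot\bar g(Z,Z)=2\,\bar g(\bar\nabla_XZ,Z)=2\varphi\,\bar g(X,Z)=0$ using metric compatibility, the CC condition, and orthogonality. The only difference is that you spell out the passage from $\bar g(Z,Z)$ to $\vert Z\vert$ (local constancy of the sign and the division by $2\vert Z\vert$), which the paper compresses into ``the result follows at once.''
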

\begin{proof}
Notice that
\begin{equation*}
X\cdot\bar{g}(Z,Z)=2\bar{g}(\bar{\nabla}_XZ,Z)=2\varphi\bar{g}(X,Z)=0. 
\end{equation*}
The result follows at once.
\end{proof}

In very recent times, we have witnessed an increasing interest in exploring the interplay between a CC vector field and the geometry of certain classes of null hypersurfaces when the ambient manifold is a spaceform  \cite{Samuel}, a space of quasi constant curvature \cite{CiriacoConforme}, or in relation to the Raychaudhuri equation \cite{CiriacoBenji}. A common starting point consists in describing a CC vector field $Z$ in terms of decomposition  \eqref{eq:ZXN}, that is, 
$Z = \overset { * } { Z } + Z _ \xi + Z _ N$, where $ Z _ \xi = \overline { g } ( Z , N ) \xi $, $ Z _ N = \overline { g } ( Z , \xi ) N $ and $ \overset { * } { Z }\in\Gamma (S(TM)) $. We now establish the basic relations arising from coupling the above relation to the Gauss and Weingarten formulae. Compare the following result with Proposition 3.1 in \cite{CiriacoConforme} and Proposition 3.7 in \cite{Samuel}.

\begin{lemma}\label{eqcomponents}
Let $ ( M ,g, S ( T M ) ) $ be a null hypersurface of $ (\bar{M},\bar{g}) $. If $ Z\in\Gamma(T\bar{M}) $ is a CC vector field, then for any $ X \in\Gamma ( S ( T M ) )$:
\begin{enumerate}
	\item[(a)] $ \varphi X = \overset { * }{ \nabla } _ X \overset { * }{ Z } - \overline{ g }( Z , N ) A ^ * _ \xi  X  - g ( Z , \xi ) A _ N  X  , $
	\item[(b)] $ C ( X , \overset { * } { Z } ) +  X \cdot g ( Z , N ) - g ( Z , N ) \tau ( X ) = 0 $ and
	\item[(c)] $ B ( X , \overset { * } { Z } ) +  X \cdot g ( Z , \xi  ) + g ( Z , \xi ) \tau ( X ) = 0 $.
\end{enumerate}
\end{lemma}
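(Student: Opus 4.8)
The plan is to compute $\bar{\nabla}_X Z$ in two ways and compare components relative to the two decompositions \eqref{eq:descomposicion1} and \eqref{eq:descomposicion2}. On one hand, since $Z$ is CC we have $\bar{\nabla}_X Z = \varphi X$ by \eqref{eq:ccvf}, and since $X\in\Gamma(S(TM))\subset\Gamma(TM)$, the vector $\varphi X$ lies entirely in $S(TM)$; thus its $\mathrm{rad}(TM)$ and $\mathrm{tr}(TM)$ components vanish, while its $S(TM)$ component is $\varphi X$ itself. On the other hand, writing $Z=\overset{*}{Z}+Z_\xi+Z_N$ and applying the master identity \eqref{eq:components} with $Y=Z$ gives a decomposition of $\bar{\nabla}_X Z$ into its $S(TM)$, $\mathrm{rad}(TM)$ and $\mathrm{tr}(TM)$ parts. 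Matching the three components of the two expressions should yield the three claimed identities directly.

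First I would handle the $S(TM)$ component. From \eqref{eq:components} the $S(TM)$ part of $\bar{\nabla}_X Z$ is $\overset{*}{\nabla}_X\overset{*}{Z}-A^*_{Z_\xi}X-A_{Z_N}X$. Since $Z_\xi=\bar{g}(Z,N)\xi$ and $Z_N=\bar{g}(Z,\xi)N$, linearity of the shape operators in the transversal/radical argument gives $A^*_{Z_\xi}X=\bar{g}(Z,N)A^*_\xi X$ and $A_{Z_N}X=\bar{g}(Z,\xi)A_N X=g(Z,\xi)A_N X$. Equating this $S(TM)$ component to $\varphi X$ produces statement (a) at once.

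Next I would treat the $\mathrm{rad}(TM)$ and $\mathrm{tr}(TM)$ components, which must both vanish. The cleaner route for (b) and (c) is to pair $\bar{\nabla}_X Z=\varphi X$ against $N$ and against $\xi$ respectively, using \eqref{eq:N}. For (c), compute $\bar{g}(\bar{\nabla}_X Z,\xi)=\varphi\,\bar{g}(X,\xi)=0$ since $X\in\Gamma(S(TM))$; then expand the left side via the product rule, $\bar{g}(\bar{\nabla}_X Z,\xi)=X\cdot\bar{g}(Z,\xi)-\bar{g}(Z,\bar{\nabla}_X\xi)$, and insert the Weingarten formula $\bar{\nabla}_X\xi=-A^*_\xi X-\tau(X)\xi$ from \eqref{eq:GWapp}. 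Using $\bar{g}(Z,A^*_\xi X)=B(X,\overset{*}{Z})$ via \eqref{eq:relacionAh} (noting $A^*_\xi X\in S(TM)$ so only $\overset{*}{Z}$ contributes) and $\bar{g}(Z,\xi)=g(Z,\xi)$ should assemble into statement (c). The analogous pairing with $N$, together with $\bar{\nabla}_X N=-A_N X+\tau(X)N$ and \eqref{eq:relacionAh} for $C$, yields (b).

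The main obstacle will be bookkeeping the sign conventions and making sure each shape-operator contraction is identified with the correct bilinear form $B$ or $C$ and evaluated on the right projection of $Z$; in particular one must verify that $\bar{g}(Z,A^*_\xi X)$ reduces to $B(X,\overset{*}{Z})$ rather than $B(X,Z)$, which holds precisely because $A^*_\xi X\in\Gamma(S(TM))$ and $B(X,\xi)=0$ by \eqref{eq:bcero}, so the $Z_\xi$ and $Z_N$ pieces drop out. Everything else is a routine application of \eqref{eq:N}, \eqref{eq:GWapp} and \eqref{eq:relacionAh}, so no deeper idea beyond the two-way component comparison is needed.
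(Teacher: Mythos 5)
Your proposal is correct. Part (a) is argued exactly as in the paper: read off the $S(TM)$ component of \eqref{eq:components} with $Y=Z$, use tensoriality of the shape operators in the radical/transversal slot ($A^*_{Z_\xi}X=\bar{g}(Z,N)A^*_\xi X$, $A_{Z_N}X=\bar{g}(Z,\xi)A_N X$), and equate with $\varphi X$. Where you genuinely diverge is in (b) and (c). The paper stays inside the component-matching framework: it reads off that the $\mathrm{rad}(TM)$ and $\mathrm{tr}(TM)$ components of $\bar{\nabla}_X Z$ vanish, i.e. $0=h(X,\overset{*}{Z})+\nabla^t_X Z_N$ and $0=\overset{*}{h}(X,\overset{*}{Z})+\nabla^{*t}_X Z_\xi$, and then expands $\nabla^t_X Z_N=\nabla^t_X\bigl(\bar{g}(Z,\xi)N\bigr)$ by Leibniz together with $\nabla^t_X N=\tau(X)N$ from \eqref{eq:tau}. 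You instead pair $\bar{\nabla}_X Z=\varphi X$ against $\xi$ and $N$, using metric compatibility of $\bar{\nabla}$ and the Weingarten formulae \eqref{eq:GWapp}. The two mechanisms are equivalent: since $\bar{g}(\cdot,\xi)$ annihilates all of $TM$ it isolates exactly the $\mathrm{tr}(TM)$ coefficient, and $\bar{g}(\cdot,N)$ isolates the $\mathrm{rad}(TM)$ coefficient, so your pairings extract precisely the two vanishing components the paper writes down. Your route needs one extra observation, which you effectively supply: \eqref{eq:GWapp} gives $\nabla_X\xi=-A^*_\xi X-\tau(X)\xi$ for the \emph{induced} connection, so to use it inside $\bar{g}(Z,\bar{\nabla}_X\xi)$ you must note $\bar{\nabla}_X\xi=\nabla_X\xi+h(X,\xi)$ and $h(X,\xi)=B(X,\xi)N=0$ by \eqref{eq:bcero}. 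Likewise your identifications $\bar{g}(Z,A^*_\xi X)=B(X,\overset{*}{Z})$ and $\bar{g}(Z,A_N X)=C(X,\overset{*}{Z})$ are correctly justified by the fact that $A^*_\xi X$ and $A_N X$ are $S(TM)$-valued, so the $Z_\xi$ and $Z_N$ pieces pair to zero. In the end the paper's version is marginally shorter because the vanishing of the two non-screen components is already displayed in \eqref{eq:components}, while yours avoids manipulating the line-bundle connections $\nabla^t$, $\nabla^{*t}$ directly; both are complete proofs.
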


\begin{proof}
	By Eq. \ref{eq:components} we have
	\begin{eqnarray*}
	\varphi X = (\overset{*}{\nabla}_X\overset{*}{Z}-A_{Z_\xi}^*X-A_{Z_N}X) + (\overset{*}{h}(X,\overset{*}{Z}+\nabla_X^{*t}Z_\xi )+( h(X, \overset{*}{Z})+\nabla_X^tZ_N)
	\end{eqnarray*}
	 Thus comparing the $S(TM)$ components in both sides of the above equation yields
\[
\varphi X  = \overset { * } { \nabla } _ X  \overset { * } { Z } - g ( Z , N ) A _ \xi ^ *  X  - g ( Z , \xi ) A _ N  X
\]
so (a) holds. Similarly, by looking at the $\mathrm{rad}(TM)$ and   $ \mathrm { tr } ( M ) $ components  we get
\begin{eqnarray*}
	0 &=& h ( X , \overset { * } { Z } ) + \nabla _ X ^ t Z _ N, \\
	0 & =& \overset { * } { h } ( X , \overset { * } { Z } ) + \overset { * } { \nabla } _ X ^ t  Z _ \xi  .
	\end{eqnarray*}
Recall that $h(X,\overset { * } { Z }) = B(X, \overset { * } { Z }) N $ and thus
	\[
	\nabla _ X ^ t Z _ N = \nabla _ X ^ t g(Z, \xi) N =(( X \cdot g ( Z , \xi ) ) + g ( Z , \xi ) \tau ( X ))N.
	\]
so (c) holds as well. Finally, (b) follows from an argument analogous to the proof of (c) above.
\end{proof}

As an immediate consequence of Lemma \ref{eqcomponents} (a) we can
describe the cases in which $Z$ lies entirely on $\mathrm{rad}(TM)$ or $\mathrm{tr}(TM)$ (compare to Thrm. 3.9 in \cite{Samuel}).

\begin{corollary}\label{prop:totumb}
Let $ ( M ,g, S ( T M ) ) $ be a null hypersurface of $ (\bar{M},\bar{g}) $ and  $ Z\in\Gamma(T\bar{M}) $ a CC vector field. Then
\begin{enumerate} 
\item If  $ \overset { * } { Z } = Z _ \xi = 0  $ , then $ M $ is totally umbilical.
\item If $ \overset { * } { Z } = Z _ N = 0 $, then $ S ( T M ) $ is totally umbilical.
\end{enumerate}
Moreover, if $Z$ is parallel then
\begin{enumerate} 
\item If  $ \overset { * } { Z } = Z _ \xi = 0  $ , then $ M $ is totally geodesic.
\item If $ \overset { * } { Z } = Z _ N = 0 $, then $ S ( T M ) $ is totally geodesic.
\end{enumerate}
\end{corollary}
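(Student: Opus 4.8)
The plan is to extract all four conclusions directly from part (a) of Lemma~\ref{eqcomponents}, which for every $X\in\Gamma(S(TM))$ reads
\[
\varphi X=\overset{*}{\nabla}_X\overset{*}{Z}-\bar{g}(Z,N)\,A^*_\xi X-g(Z,\xi)\,A_N X .
\]
The crucial observation is that the scalar multiplying $A^*_\xi$ is precisely the coefficient $\bar g(Z,N)$ that appears in $Z_\xi=\bar g(Z,N)\xi$, while the scalar multiplying $A_N$ is the coefficient $g(Z,\xi)$ appearing in $Z_N=\bar g(Z,\xi)N$. Consequently, each hypothesis in the statement switches off exactly one ingredient of this formula: imposing $\overset{*}{Z}=0$ removes the connection term $\overset{*}{\nabla}_X\overset{*}{Z}$, the condition $Z_\xi=0$ (that is, $\bar g(Z,N)=0$) annihilates the $A^*_\xi$ term, and $Z_N=0$ (that is, $g(Z,\xi)=0$) annihilates the $A_N$ term.

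First I would dispose of the two umbilical cases. In either case two of the three terms drop out, leaving a single identity of the form $\varphi X=c\,A_\bullet X$ for all $X\in\Gamma(S(TM))$, where $A_\bullet$ is the one surviving shape operator and $c$ is the one surviving component of $Z$. Since $Z\neq 0$ while all but one of its components are assumed to vanish, this remaining coefficient $c$ is nowhere zero; dividing gives $A_\bullet X=\lambda PX$ with $\lambda:=-\varphi/c$ a smooth function on $M$ (using $PX=X$ because $X\in\Gamma(S(TM))$). Comparing with the shape-operator description of umbilicity recalled in Section~\ref{sec2}, this is exactly the total umbilicity of the object governed by the surviving operator: a surviving $A^*_\xi$ expresses that $M$ is totally umbilical, while a surviving $A_N$ expresses that $S(TM)$ is totally umbilical.

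For the parallel refinements I would specialise the same relations to $\varphi\equiv 0$. Each surviving identity then becomes $c\,A_\bullet X=0$ on $S(TM)$, and since $c$ is nowhere zero the relevant shape operator vanishes identically on $S(TM)$; this upgrades umbilicity to total geodesy, i.e. $A^*_\xi\equiv 0$ in the case governing $M$ and $A_N\equiv 0$ in the case governing $S(TM)$. Because all four conclusions are frame-independent, as noted in Section~\ref{sec2}, the particular normalisation of $\{\xi,N\}$ plays no role.

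I expect the computation itself to be immediate once Lemma~\ref{eqcomponents}(a) is available; the only point requiring genuine care is the bookkeeping that matches each vanishing hypothesis to the term it eliminates, together with the verification that the leftover coefficient $c$ is nowhere zero so that $\lambda$ is a well-defined smooth function. This nonvanishing is precisely where the hypothesis $Z\neq 0$ enters, and it is what legitimises the passage from the pointwise relation $\varphi X=c\,A_\bullet X$ to the global umbilicity and geodesy statements.
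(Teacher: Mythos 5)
Your method is exactly the paper's: the paper gives no separate proof, presenting the corollary as an immediate consequence of Lemma~\ref{eqcomponents}(a), and your reduction of each case to a single surviving term of that identity is the intended argument. The problem is that your (correct) bookkeeping, once actually traced through, does not deliver the statement as printed. Under hypothesis (1), $\overset{*}{Z}=Z_\xi=0$ means $\bar g(Z,N)=0$ and $Z=\bar g(Z,\xi)N$ is transversal; the term that dies in Lemma~\ref{eqcomponents}(a) is the $A^*_\xi$ term, the survivor is $A_N$, and dividing gives $A_NX=-\bigl(\varphi/\bar g(Z,\xi)\bigr)X$ on $\Gamma(S(TM))$, which by the dictionary you yourself quote (``a surviving $A_N$ expresses that $S(TM)$ is totally umbilical'') is umbilicity of the \emph{screen}, not of $M$. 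Symmetrically, hypothesis (2) puts $Z\in\Gamma(\mathrm{rad}(TM))$, kills the $A_N$ term, and the surviving $A^*_\xi$ yields that $M$ is totally umbilical. So your computation proves the corollary with conclusions (1) and (2) interchanged, and the same swap occurs in the parallel refinements. Your write-up never performs this final matching --- it is hidden in the phrase ``the total umbilicity of the object governed by the surviving operator'' --- so as written it does not establish the literal statement, and had you done the matching you would have found it contradicts the statement.

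The resolution is that the statement as printed is itself at fault: the two conclusions are transposed (compare Theorem 3.9 of \cite{Samuel}, which the paper cites, where the radical case gives $M$ totally umbilical and the transversal case gives $S(TM)$ totally umbilical). A sanity check confirms this: if $Z\in\Gamma(\mathrm{rad}(TM))$ then $\overset{*}{Z}=Z_N=0$ holds for \emph{every} choice of screen, so hypothesis (2) cannot force a screen-dependent conclusion such as umbilicity of $S(TM)$; it can only force the screen-independent conclusion that $M$ is totally umbilical, which is what the computation gives (e.g.\ the light cone with the radial field). The right way to present your argument is to prove the corrected pairing and flag the misprint. Two smaller points. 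First, the nonvanishing of the surviving coefficient indeed requires $Z$ to be nowhere zero along $M$, which you correctly isolate (note it also forces $Z$ to be a null vector field in both cases). Second, under the paper's definition of screen umbilicity ($A_NX=\lambda PX$ for \emph{all} $X\in\Gamma(TM)$) you still need $A_N\xi=0$, which Lemma~\ref{eqcomponents}(a) cannot give since it is restricted to $X\in\Gamma(S(TM))$: apply the closed conformal equation with $X=\xi$ and use that $A_N$ is $S(TM)$-valued (Proposition~\ref{prop:general}). The corresponding issue for the $M$-umbilical case is automatic, since $A^*_\xi\xi=0$.
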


The following Lemma will be key in several results in Sections \ref{S1} y \ref{S2}.

\begin{lemma}\label{lemma:zperp}
Let $ ( M ,g, S ( T M ) ) $ be a null hypersurface of $ (\bar{M},\bar{g}) $ and  $ Z\in\Gamma(T\bar{M}) $ a CC vector field. If $S(TM)$ in integrable, then
\[
X\cdot (\bar{g}(X,\xi )\bar{g}(X,N)) =-\bar{g} (X,A_{Z^\perp}\overset{*}{Z}), \quad X\in\Gamma (S(TM)).
\]
\end{lemma}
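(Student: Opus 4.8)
The plan is to reduce everything to the Leibniz rule together with the two scalar identities already recorded in Lemma~\ref{eqcomponents}. I read the left-hand side as $X\cdot\big(\bar g(Z,\xi)\,\bar g(Z,N)\big)$, the derivative of the product of the two null coefficients of $Z$; writing $a:=\bar g(Z,N)$ and $b:=\bar g(Z,\xi)$, the decomposition $Z=\overset{*}{Z}+a\xi+bN$ shows these are exactly the coefficients of $Z_\xi$ and $Z_N$, so the quantity to be differentiated is the product $ab$. Thus the whole computation hinges on expanding
\[
X\cdot(ab)=(X\cdot a)\,b+a\,(X\cdot b).
\]

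For $X\in\Gamma(S(TM))$ the two derivatives $X\cdot a$ and $X\cdot b$ are precisely the content of Lemma~\ref{eqcomponents}: part (b) gives $X\cdot a=-C(X,\overset{*}{Z})+a\,\tau(X)$ and part (c) gives $X\cdot b=-B(X,\overset{*}{Z})-b\,\tau(X)$. (Should one prefer not to cite Lemma~\ref{eqcomponents}, these follow directly from $\bar\nabla_XZ=\varphi X$ and the Weingarten formulae \eqref{eq:GWapp}, using $\bar g(X,\xi)=\bar g(X,N)=0$ and that $A_NX,\,A_\xi^*X\in\Gamma(S(TM))$.) Substituting into the Leibniz expansion, the contributions $ab\,\tau(X)$ and $-ab\,\tau(X)$ cancel, leaving
\[
X\cdot(ab)=-b\,C(X,\overset{*}{Z})-a\,B(X,\overset{*}{Z}).
\]
This cancellation of $\tau$ is the structural reason the identity is gauge-independent, and it is the only place where the product $ab$ — rather than $a$ or $b$ separately — is essential.

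It then remains to recognize this expression as $-\bar g(X,A_{Z^\perp}\overset{*}{Z})$. Using \eqref{eq:relacionAh} I rewrite $C(X,\overset{*}{Z})=g(\overset{*}{Z},A_NX)$ and $B(X,\overset{*}{Z})=g(\overset{*}{Z},A_\xi^*X)$, so that $X\cdot(ab)=-a\,g(\overset{*}{Z},A_\xi^*X)-b\,g(\overset{*}{Z},A_NX)$. Since $Z^\perp=a\xi+bN$ and the shape operators are $C^\infty(M)$-linear in their vector argument, equation \eqref{eq:shapeops} identifies $A_{Z^\perp}=a\,A_\xi^*+b\,A_N$ on $S(TM)$. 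Here the hypothesis that $S(TM)$ is integrable is decisive: by Proposition~\ref{prop:general} it makes $A_N$ symmetric with respect to $g$ (whereas $A_\xi^*$ is always symmetric), so I may move each shape operator across $\bar g$ to obtain
\[
\bar g(X,A_{Z^\perp}\overset{*}{Z})=a\,g(A_\xi^*X,\overset{*}{Z})+b\,g(A_NX,\overset{*}{Z})=a\,g(\overset{*}{Z},A_\xi^*X)+b\,g(\overset{*}{Z},A_NX).
\]
Comparing the two displays gives $X\cdot(ab)=-\bar g(X,A_{Z^\perp}\overset{*}{Z})$, which is the claim.

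I expect the last identification to be the main obstacle, chiefly the bookkeeping around self-adjointness: without integrability of $S(TM)$ the operator $A_N$ need not be symmetric, so it could not be transferred to the other slot of $\bar g$ and the right-hand side would fail to collapse into $\bar g(X,A_{Z^\perp}\overset{*}{Z})$. A secondary point to verify is that replacing $Z$ by its screen projection $\overset{*}{Z}$ inside the inner products is legitimate: the difference $Z-\overset{*}{Z}=Z^\perp=a\xi+bN$ is $\bar g$-orthogonal to the $S(TM)$-valued vectors $A_NX$ and $A_\xi^*X$, so it contributes nothing.
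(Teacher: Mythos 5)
Your proof is correct and follows essentially the same route as the paper's: Leibniz on $\bar g(Z,\xi)\,\bar g(Z,N)$, parts (b) and (c) of Lemma~\ref{eqcomponents} (with the $\tau$-terms cancelling), the relations \eqref{eq:relacionAh}, symmetry of $A_\xi^*$ and of $A_N$ (the latter from integrability of $S(TM)$), and Eq.~\eqref{eq:shapeops} to assemble $A_{Z^\perp}$. You correctly read the misprinted left-hand side as $X\cdot\bigl(\bar g(Z,\xi)\,\bar g(Z,N)\bigr)$, and your write-up merely makes explicit the gauge-cancellation step that the paper leaves implicit.
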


\begin{proof}
Since $S(TM)$ is integrable then $A_N$ is symmetric. Thus by points (b) and (c) of Lemma \ref{eqcomponents} and Eq \eqref{eq:shapeops} we have
\begin{align*}
X\cdot (\bar{g}(X,\xi )\bar{g}(X,N)) &= -\bar{g}(Z,\xi)\bar{g}(X,A_N\overset{*}{Z})-\bar{g}(Z,N)\bar{g}(X,A^*_\xi \overset{*}{Z})\\
&= -\bar{g}(X,A_{Z_N}\overset{*}{Z})-\bar{g}(X,A^*_{Z_\xi} \overset{*}{Z})\\
&= -\bar{g}(X,A_{Z^\perp}\overset{*}{Z}).
\end{align*}
\end{proof}

We end up this section with the computation of the screen gradient of the norm of a CC vector field $Z$.

\begin{lemma}\label{eqGrads}
Let $ ( M ,g, S ( T M ) ) $ be a null hypersurface of $ (\bar{M},\bar{g}) $ and $ Z\in\Gamma(T\bar{M}) $ a CC vector field. Then 
\[
    \overset { * } { \nabla } \vert Z \vert  = \frac{\varepsilon_Z\varphi}{\vert Z\vert} \overset { * } { Z },
\]
where $\varepsilon_Z=\pm 1$ is the sign of $\overline{g}(Z,Z)$.
\end{lemma}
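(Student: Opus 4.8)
The plan is to compute the screen gradient directly from its defining property. Since $S(TM)$ carries the positive-definite metric $\bar g\vert_{S(TM)}$ (recall \eqref{eq:descomposicion1} and the Lorentzian character of $\bar g$), the screen gradient $\overset{*}{\nabla}\vert Z\vert$ is the unique section of $S(TM)$ characterized by $\bar g(\overset{*}{\nabla}\vert Z\vert, X) = X\cdot\vert Z\vert$ for every $X\in\Gamma(S(TM))$. Hence it suffices to evaluate $X\cdot\vert Z\vert$ and recognize it as $\bar g$ paired against $\tfrac{\varepsilon_Z\varphi}{\vert Z\vert}\overset{*}{Z}$.

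First I would write $\vert Z\vert = \sqrt{\varepsilon_Z\,\bar g(Z,Z)}$ and differentiate along $X$, obtaining $X\cdot\vert Z\vert = \tfrac{\varepsilon_Z}{2\vert Z\vert}\,X\cdot\bar g(Z,Z)$. Then, exactly as in the proof of Lemma \ref{prop:ccc}, the closed conformal condition \eqref{eq:ccvf} gives $X\cdot\bar g(Z,Z) = 2\bar g(\bar\nabla_X Z, Z) = 2\varphi\,\bar g(X,Z)$, so that $X\cdot\vert Z\vert = \tfrac{\varepsilon_Z\varphi}{\vert Z\vert}\,\bar g(X,Z)$.

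The remaining step is to replace $\bar g(X,Z)$ by $\bar g(X,\overset{*}{Z})$. Using the decomposition $Z=\overset{*}{Z}+Z_\xi+Z_N$ with $Z_\xi=\bar g(Z,N)\xi$ and $Z_N=\bar g(Z,\xi)N$, and recalling that $X\in\Gamma(S(TM))$ is orthogonal to both $\xi$ and $N$ in view of \eqref{eq:descomposicion1} and \eqref{eq:N}, the $Z_\xi$ and $Z_N$ contributions vanish, leaving $\bar g(X,Z)=\bar g(X,\overset{*}{Z})$. Therefore $X\cdot\vert Z\vert = \bar g\!\left(\tfrac{\varepsilon_Z\varphi}{\vert Z\vert}\overset{*}{Z},X\right)$ for all $X\in\Gamma(S(TM))$, and since $\overset{*}{Z}\in\Gamma(S(TM))$ the characterization of the screen gradient yields the claim.

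I do not anticipate a genuine obstacle here: the computation is short and structurally identical to Lemma \ref{prop:ccc}, merely upgraded from a statement about directional derivatives vanishing to an explicit identification of the gradient. The only point requiring a little care is the bookkeeping that isolates $\overset{*}{Z}$ from the radical and transversal components, which is immediate from the orthogonality built into the screen decomposition.
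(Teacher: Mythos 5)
Your proof is correct and follows essentially the same route as the paper's: both differentiate $\bar g(Z,Z)$ along screen directions, use the closed conformal condition $\bar\nabla_X Z=\varphi X$ to get $2\varphi\,\bar g(X,Z)$, and identify the result with $\overset{*}{Z}$ via the screen decomposition. The only difference is presentational --- you characterize the gradient by duality against arbitrary $X\in\Gamma(S(TM))$, while the paper expands it in an orthonormal frame of $S(TM)$; these are the same argument.
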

\begin{proof}
	Given an orthonormal frame $ \{ e _ 1, e _ 2 , \ldots , e _ n \} $ of $ S ( T M ) $  we have
	\begin{align*}
	\overset { * } { \nabla } \bar{ g }  ( Z , Z  ) & = \sum _ { i = 1 } ^ n   ( e _ i \cdot \bar{ g } ( Z , Z )  ) e _ i= \sum _ { i = 1 } ^ n  2 \bar{ g }  ( \overline { \nabla } _ { e _ i } Z , Z  ) e _ i= \sum _ { i = 1 } ^ n  2 \bar{ g } ( \varphi e _ i , Z ) e _ i \\
	& = 2 \varphi \overset { * } { Z }.
	\end{align*}
Finally
\[ \overset { * } { \nabla } \lvert Z \rvert = \frac { \overset { * } { \nabla } \varepsilon _ Z g ( Z , Z ) } { 2 \lvert Z \rvert } = \frac { \varepsilon _ Z \varphi } { \lvert Z \rvert } \overset { * } { Z }.  \]
\end{proof}

\section{Screen quasi-conformal null hypersurfaces} 

One key question that arises naturally is whether the geometry of $S(TM)$ as a subbundle of $TM$ is related in a natural way to its geometry as a subbundle of $T\bar{M}$. In \cite{MR2039644} K. Duggal and C. Atindogbe define the notion of screen conformal null hypersurface and show that the answer is affirmative for this class of hypersurfaces. In precise terms, a null  hypersurface $(M,g,S(TM))$ is {\em screen conformal} if its shape operators are linearly dependent, so there exists $\phi \in\mathcal{C}^\infty (M)$ such that 
\[
A_N=\phi A_\xi^*.
\]
Numerous classification results exist for such hypersurfaces when the ambient manifold $(\bar{M},\bar{g})$ has constant curvature \cite{MR2598375}. However, in virtue of Example \ref{ex:GRW} an important class of space-times does not fit in this context. Thus, in order to incorporate such examples, the notion of screen quasi-conformal null hypersurface was introduced in \cite{MR4135826}.

\begin{definition}\label{def:qconforme}
A null hypersurface $(M,g,S(TM))$ of $(\bar{M},\bar{g})$ is {\em screen quasi-conformal} if the shape operators $A_N$ and $A_\xi^*$ of $M$ and $S(TM)$ satisfy
\begin{equation*}
A_N=\phi A_\xi^*+ \psi P,
\end{equation*}
for some functions $\phi,\psi\in\mathcal{C}^\infty (M)$; where $P:\Gamma(TM)\to \Gamma(S(TM))$ is the natural projection. We call $(\phi,\psi)$ a {\em quasi-conformal pair}. 
\end{definition}

Screen quasi-conformal null hypersurfaces satisfying classical geometric restrictions (totally geodesic, totally umbilical, isoparametric, Einstein) have been studied recently (see \cite{Gelocor} and references therein). Moreover, if $(M,g,S(TM))$ is screen quasi-conformal, then $S(TM)$ is integrable (see Theorem 3.7 in \cite{MR4135826}). As can be readily checked, the null hypersurfaces $(M,g,S^*(TM))$ depicted in Example  \ref{ex:GRW} are screen quasi-conformal with quasi-conformal pair $(1,\sqrt{2}\varrho'/\varrho )$. \footnote{Notice that in this case the following additional conditions hold: (a) $\phi$ and $\psi$ are constant along the leaves of $S(TM)$, and (b) $\tau (X)=0$ for all $X\in\Gamma (S(TM))$. }

At this point it is natural to ask under which circumstances we can  endow a degenerate hyperfurface with a screen distribution in a way that the corresponding null hyperfurface $(M,g,S(TM))$ is screen conformal. In \cite{MR2598375},  Theorem 2.3.5 establishes that if the ambient space admits a \textit{parallel} vector field, then it induces a screen conformal structure in any null hyperfurface. As the next result shows, there is an analogous connection between closed and conformal vector fields and screen quasi-conformal distributions, thus generalizing the aforementioned result.

\begin{theorem}\label{theoscreenquasi}
Let $(M,g)$ be a degenerate hypersurface immersed  $(\bar{M},\bar{g})$ and $Z\in \Gamma (T\bar{M})$ a closed and conformal vector field  with $\bar{g} (Z,Z)\neq 0$. If $Z$ is nowhere tangent to $M$ then there exists a screen distribution $S^\prime (TM)$ so that $(M,g,S^\prime (TM))$ is screen quasi-conformal. Furthermore, $\tau (X)=0$ $\forall X\in\Gamma (S^\prime(TM))$.
\end{theorem}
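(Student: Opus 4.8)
The plan is to use the vector field $Z$ itself as the rigging (transversal) field, which collapses Lemma \ref{eqcomponents} to exactly the desired conclusions. First I would observe that, since $\xi$ spans $\mathrm{rad}(TM)$ and is $\bar g$-orthogonal to all of $TM$, a dimension count gives $T_pM=\xi_p^\perp$; hence the hypothesis that $Z$ is nowhere tangent to $M$ is equivalent to $\bar g(Z,\xi)\neq 0$ everywhere, i.e. $Z$ is transversal. Applying Remark \ref{rem:stm} with $\zeta=Z$ then produces a null section $N_Z$ and a screen distribution which can be described intrinsically as $S'(TM)=\{W\in TM:\bar g(W,Z)=0\}$. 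The decisive gain is that $Z\in\Gamma(S'(TM)^\perp)$ by construction, so in the decomposition \eqref{eq:ZXN} relative to this screen the screen component vanishes: $\overset{*}{Z}=0$. Using the gauge freedom of Remark \ref{rem:gauge} I would rescale $\xi$ so that $\bar g(Z,\xi)=1$; since this leaves $\mathrm{span}(Z,\xi)$ unchanged, it does not alter $S'(TM)$.

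With $\overset{*}{Z}=0$ the relevant parts of Lemma \ref{eqcomponents} become transparent. Part (a) reads $\varphi X=-\bar g(Z,N_Z)A^*_\xi X-\bar g(Z,\xi)A_{N_Z}X$ for $X\in\Gamma(S'(TM))$; solving for $A_{N_Z}$ and using $\bar g(Z,\xi)=1$ yields $A_{N_Z}X=\phi A^*_\xi X+\psi PX$ with $\phi=-\bar g(Z,N_Z)=-\tfrac12\bar g(Z,Z)$ and $\psi=-\varphi$, which is precisely the screen quasi-conformal relation of Definition \ref{def:qconforme} on the screen. Part (c), now reading $X\!\cdot\!\bar g(Z,\xi)+\bar g(Z,\xi)\tau(X)=0$, combined with $\bar g(Z,\xi)\equiv 1$, gives $\tau(X)=0$ for all $X\in\Gamma(S'(TM))$, which is the second assertion.

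The step requiring the most care is promoting the operator identity from $S'(TM)$ to all of $\Gamma(TM)$, i.e. verifying it also on the radical direction $\xi$. Both $A^*_\xi$ and $P$ annihilate $\xi$ (by Proposition \ref{prop:general} and since $\xi\in\mathrm{rad}(TM)$), so I must show $A_{N_Z}\xi=0$. This follows from the symmetry of $A_{N_Z}$ with respect to $g$ --- a consequence of the symmetry of $h$ via \eqref{eq:relacionAh}, which holds with no integrability hypothesis: for every $X\in\Gamma(TM)$ one has $g(A_{N_Z}\xi,X)=g(\xi,A_{N_Z}X)=0$, because $A_{N_Z}X\in S'(TM)$ is $g$-orthogonal to $\xi$; since moreover $A_{N_Z}\xi\in S'(TM)$ and $g$ is positive definite on the screen, this forces $A_{N_Z}\xi=0$. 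A minor remaining point is the global validity of the normalization $\bar g(Z,\xi)=1$, which is unproblematic since $\bar g(Z,\xi)$ is a smooth, nowhere-vanishing function on (each connected component of) $M$.
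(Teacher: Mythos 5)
Your construction is the paper's own proof in different clothing: the screen $S'(TM)=\{W\in TM:\bar g(W,Z)=0\}$ produced by Remark \ref{rem:stm} with $\zeta=Z$ is exactly the paper's $E^\perp$ with $E=\mathrm{span}(Z)\oplus\mathrm{rad}(TM)$, and on screen directions your use of Lemma \ref{eqcomponents}(a),(c) with $\overset{*}{Z}=0$ (plus the harmless gauge normalization $\bar g(Z,\xi)=1$) reproduces the paper's computation of $\bar\nabla_XZ=\varphi X$; that part, and the conclusion $\tau(X)=0$ on $S'(TM)$, are correct. The genuine gap is precisely in the step you flag as delicate, the claim $A_{N_Z}\xi=0$. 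You justify it by ``symmetry of $A_{N_Z}$ with respect to $g$, a consequence of the symmetry of $h$ via \eqref{eq:relacionAh}.'' That inference is invalid: Eq.~\eqref{eq:relacionAh} ties $B$ (hence $h$) to $A^*_\xi$, which is why $A^*_\xi$ is always symmetric; the operator tied to $A_N$ is $C$ (i.e.\ $h^*$), and by Proposition \ref{prop:general}(4) its symmetry --- and only on screen pairs --- is equivalent to integrability of the screen, not automatic. (The sentence in the paper below Eq.~\eqref{eq:bcero} asserting the contrary has the roles of $A_N$ and $A^*_\xi$ interchanged; taken literally it would make every screen integrable.) Worse, on the pairs you actually need, the claim is circular: $g(\xi,A_{N_Z}X)=0$ holds trivially because $\xi$ is radical, while $g(A_{N_Z}\xi,X)=C(\xi,X)$, so ``symmetry on pairs containing $\xi$'' is verbatim the statement $A_{N_Z}\xi=0$ you are trying to prove. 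The symmetry claim is also genuinely false for general screens: in $\mathbb{R}^3_1$ take the null plane with $\xi=e_0+e_1$ and screen spanned by $W=e_2+f\xi$; a direct computation gives $A_N\xi=(\xi\cdot f)\,W\neq 0$ whenever $\xi\cdot f\neq 0$, even though this rank-one screen is integrable and $h$ is of course symmetric.

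The fix is short, and it is what the paper's proof actually does: instead of quoting Lemma \ref{eqcomponents}, which is stated only for $X\in\Gamma(S(TM))$, run the identity $\bar\nabla_XZ=\varphi X$ for \emph{all} $X\in\Gamma(TM)$, in particular $X=\xi$. Writing $Z=\bar g(Z,N_Z)\xi+\bar g(Z,\xi)N_Z$ (possible since $\overset{*}{Z}=0$) and using $h(\xi,\xi)=0$, $A^*_\xi\xi=0$ and the Weingarten formulae \eqref{eq:GWapp}, the $S'(TM)$-component of $\bar\nabla_\xi Z$ is $-\bar g(Z,\xi)A_{N_Z}\xi$, while $\varphi\xi$ has no screen component; since $\bar g(Z,\xi)\neq 0$ this forces $A_{N_Z}\xi=0$. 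Alternatively, note that $g(A_{N_Z}\xi,X)=\bar g(\bar\nabla_\xi X,N_Z)$ for screen $X$, and expanding $N_Z$ in terms of $Z$ and $\xi$ both terms vanish, since $\bar g(\bar\nabla_\xi X,Z)=-\bar g(X,\bar\nabla_\xi Z)=-\varphi\bar g(X,\xi)=0$ and $\bar g(\bar\nabla_\xi X,\xi)=0$; the essential point is that this vanishing uses the closed conformal field $Z$ and this particular screen, not any general symmetry of $A_N$. With either replacement your argument becomes complete.
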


\begin{proof} 
Since $Z_p\not\in T_pM$ we have that $E=span (Z)\oplus \mathrm{Rad}(TM)$ is a rank $2$ vector bundle in which $\bar{g}\vert_E$ has Lorentzian character. Consider $S^\prime (TM)=E^\perp$ and let $\mathrm{tr}(TM)$ be a null line bundle complementary to $\mathrm{Rad}(TM)$ in $E$. Since $Z\in\Gamma (E)$ we have $Z_r\in\Gamma (\mathrm{Rad}(TM))$ and  $Z_t\in\Gamma (\mathrm{tr}(TM))$ such that 
\[
Z=Z_r+Z_t
\]
Hence, we can choose $\xi\in\Gamma (\mathrm{Rad}(TM))$ and $N\in\Gamma (\mathrm{Rad}(TM))$ as
\[
\xi =Z_r,\quad N=\frac{1}{\bar{g}(Z_r,Z_t)}Z_t,
\]
or equivalently
\[
Z=\xi + \theta N,  
\]
where
\[
\theta =\bar{g}(Z_r,Z_t)=\frac{1}{2}\bar{g}(Z,Z).
\]
Thus, for $X\in \Gamma (TM)$ we have
\begin{eqnarray*}
\varphi X &=& \bar{\nabla}_XZ\\
&=& \bar{\nabla}_X\xi +(X\cdot \theta )N+\theta \bar{\nabla}_XN\\
&=& \nabla_X\xi + h(X,\xi )+(X\cdot \theta )N+\theta (-A_NX+\tau (X)N)\\
&=& -A^*_\xi X-\theta A_NX-\tau (X)\xi +(X\cdot \theta )N+\theta \tau (X)N.
\end{eqnarray*}
Thus we immediately have
\begin{eqnarray*}
\varphi PX &=& -A^*_\xi X-\theta A_NX,\\
0 &=& (X\cdot \theta )N +\theta \tau (X)N.
\end{eqnarray*}
Hence, the first equation implies that $(M,g,S^\prime (TM))$ is screen quasi-conformal. Moreover, by Lemma \ref{prop:ccc}, $\theta \neq 0$ is constant along $S^\prime (TM)$ and the second equation implies $\tau (X)=0$.
\end{proof}

Finally, let us notice that if the CC field is orthogonal to the screen distribution, then $(M,g,S(TM))$ is screen quasi-conformal.

\begin{proposition}
Let $ ( M ,g, S ( T M ) ) $ be a null hypersurface of  $ (\bar{M},\bar{g}) $. If $ Z\in\Gamma(T\bar{M}) $ is a CC vector field such that  $ \overset { * } { Z } = 0 $, then $ ( M ,g, S ( T M ) ) $ is screen quasi-conformal.
\end{proposition}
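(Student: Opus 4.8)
The plan is to exploit Lemma~\ref{eqcomponents}(a), which directly relates $\varphi X$ to the shape operators $A^*_\xi$ and $A_N$ when $Z$ is decomposed along the screen splitting. Under the hypothesis $\overset{*}{Z}=0$, the covariant derivative term $\overset{*}{\nabla}_X\overset{*}{Z}$ vanishes, so that lemma collapses to a purely algebraic identity involving the two shape operators applied to $X$.

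First I would invoke Lemma~\ref{eqcomponents}(a) with the standing assumption $\overset{*}{Z}=0$, obtaining
\[
\varphi X = -\bar{g}(Z,N)\,A^*_\xi X - g(Z,\xi)\,A_N X, \qquad X\in\Gamma(S(TM)).
\]
Next I would treat the coefficient $g(Z,\xi)$: since $\overset{*}{Z}=0$ forces $Z=Z_\xi+Z_N=\bar{g}(Z,N)\xi+\bar{g}(Z,\xi)N$, and a CC field with $\overset{*}{Z}=0$ that had $g(Z,\xi)\equiv 0$ would lie entirely in $\mathrm{rad}(TM)$, I would argue that $g(Z,\xi)$ is nonvanishing on the relevant set (or restrict attention to the open set where it is nonzero). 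Solving for $A_N X$ then yields
\[
A_N X = \frac{1}{g(Z,\xi)}\bigl(-\varphi X - \bar{g}(Z,N)\,A^*_\xi X\bigr) = -\frac{\bar{g}(Z,N)}{g(Z,\xi)}\,A^*_\xi X - \frac{\varphi}{g(Z,\xi)}\,X
\]
for all $X\in\Gamma(S(TM))$. Reading off the coefficients gives exactly the form of Definition~\ref{def:qconforme} with
\[
\phi = -\frac{\bar{g}(Z,N)}{g(Z,\xi)}, \qquad \psi = -\frac{\varphi}{g(Z,\xi)},
\]
since $X=PX$ for $X\in\Gamma(S(TM))$, establishing the screen quasi-conformal relation $A_N=\phi A_\xi^*+\psi P$ on the screen.

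The main obstacle I anticipate is the bookkeeping at the degenerate directions and the nonvanishing of $g(Z,\xi)$. The identity from Lemma~\ref{eqcomponents}(a) is stated for $X\in\Gamma(S(TM))$, whereas Definition~\ref{def:qconforme} asks for an operator identity on all of $\Gamma(TM)$; I would note that both $A_N$ and $A^*_\xi$ are $S(TM)$-valued (and $A^*_\xi\xi=0$ by Proposition~\ref{prop:general}(2), while $A_N\xi$ must be checked), so the relation extends to $\Gamma(TM)$ through the projection $P$ once the $\xi$-direction is handled. The subtlety is purely the justification that $g(Z,\xi)\neq 0$, which follows because $\overset{*}{Z}=0$ together with $g(Z,\xi)=0$ would place $Z$ in $\mathrm{rad}(TM)$, a degenerate case that can be excluded or treated separately (there $M$ is totally umbilical by Corollary~\ref{prop:totumb} and the relation holds trivially with $\phi$ arbitrary). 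Everything else is a direct algebraic rearrangement of the established component lemma.
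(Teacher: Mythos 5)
Your proof is correct and takes essentially the same route as the paper: the paper's proof likewise specializes the computation of Lemma~\ref{eqcomponents}(a) (via Theorem~\ref{theoscreenquasi}) to $\overset{*}{Z}=0$ and solves for $A_NX$, and your sign $-\varphi/g(Z,\xi)$ is in fact the one the algebra gives (the paper's $+\varphi/g(Z,\xi)$ is a typo, immaterial to the conclusion). Your extra care about $g(Z,\xi)\neq 0$ goes beyond the paper, which divides by it silently; just note that in the degenerate case $Z\in\Gamma(\mathrm{rad}(TM))$ it is $S(TM)$, not $M$, that Corollary~\ref{prop:totumb} makes totally umbilical, which still yields $A_N=\lambda P$ and hence the quasi-conformal relation with $\phi=0$ rather than ``$\phi$ arbitrary.''
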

\begin{proof}
	By the same calculations as in Theorem \ref{theoscreenquasi} we get
	\begin{align*}
	\varphi X & = - g ( Z , N ) A ^ * _ \xi X  - g ( Z , \xi ) A _ N X ,
	\end{align*}
	so we obtain
	\begin{align*}
	A _ N  X = - \frac { g ( Z , N ) } { g ( Z , \xi ) } A ^ * _ \xi X  + \frac { \varphi }{ g ( Z , \xi ) } X .
	\end{align*}
\end{proof}

\section{Constant angle null hypersurfaces}\label{S1}

As it is often the case, dealing with degenerate submanifolds requires some adaptation of the standard  definitions of classical geometrical concepts. For the case of a semi-Riemannian hypersurface, the notion of angle respect to a vector field $V$, as given by Eq. \eqref{eq:angle}, heavily depends upon the normalization of a vector field orthogonal to the hypersurface, thus providing a unit normal vector field $\mathbf{n}$ along it. For a null hypersurface $(M,g,S(TM))$ the absence of a canonical normalization in the null directions $\xi$ and $N$ rules out the possibility for the functions
\[
\measuredangle (V,\xi ):=\frac{\bar{g}(V , \xi )}{\vert V\vert}, \quad \measuredangle (V,N):= \frac{\bar{g}(V, N)}{\vert V\vert},
\]
to be suitable candidates for describing an angle between $M$ and a nowhere null vector field $V$.

Indeed, the gauge freedom of Eq. \eqref{eq:gauge} enables that such functions (when non vanishing) might take any possible value after a scaling. In particular, we can always choose $f$ such that $\measuredangle (V,\hat{\xi})$ is constant. For instance, by taking $f=\bar{g}(V ,\xi )/\vert Z\vert $ we get $\measuredangle (V,\hat{\xi}) =1$. A similar situation holds for $\measuredangle (V,\hat{N})$. Notice however, that in general we can not use Eq. \eqref{eq:gauge} in order to make {\em both} functions $\measuredangle (V,\hat{\xi})$ and $\measuredangle (V,\hat{N})$ simultaneously constant. Thus, in the spirit of keeping the algebraic form of Eq. \eqref{eq:angle}, we consider the following definition.

\begin{definition}\label{const angle}
Let $ ( M ,g, S ( T M ) ) $ be a null hypersurface of $( \bar{ M },\bar{g}) $ and $ V $ be a nowhere null vector field along $M$. We say that $( M ,g, S ( T M ) ) $ has constant angle respect to $ V $ if there exists $\xi\in\Gamma (\textrm{rad}(TM))$ and $N\in\Gamma (\textrm{tr}(TM))$ such that the functions 
\begin{equation}\label{eq:angf}
\measuredangle (V,\xi )= \frac{ \bar{ g }(  V , \xi )}{ \vert V \vert }\quad  \textrm{and}\quad  \measuredangle (V,N )=\frac{ \bar{ g }(  V  , N )}{ \vert V \vert }
 \end{equation}
 are constant.
\end{definition}

At first sight, one could object that Definition \ref{const angle} is not very useful in concrete examples, as the non-constancy of the angle functions \eqref{eq:angf} for a given pair of null vector fields $\{\xi ,N\}$ is indecisive as whether $(M,g,S(TM))$ has constant angle respect to $V$ or not. However, as the next result shows, we can formulate Definition \ref{const angle} in terms that do not depend on any particular choice of a scaling.  

\begin{lemma}\label{ca:easy}
Let $ ( M ,g, S ( T M ) ) $ be a null hypersurface in $ (\bar{ M },\bar{g}) $ and $ V $ a nowhere null vector field along $M$. Then the following statements are equivalent:
\begin{enumerate}
    \item\label{c01} $ ( M ,g, S ( T M ) ) $ has constant angle with respect to $Z$.
    \item\label{c02} For any choice of $\xi\in \Gamma (\textrm{rad}(M))$ and $N\in \Gamma (\textrm{tr}(M))$ the function $\measuredangle (V ,\xi )\measuredangle (V,N)$ is constant.
\item\label{c03} $g(V^*/\vert V\vert ,V^*/\vert V\vert)$ is constant.
\end{enumerate}
\end{lemma}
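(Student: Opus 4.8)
The plan is to prove the equivalences by establishing a single algebraic identity and then reading off each statement from it. The key observation is the decomposition \eqref{eq:ZXN} applied to $V$: writing $V=\overset{*}{V}+V_\xi+V_N$ with $V_\xi=\bar g(V,N)\xi$ and $V_N=\bar g(V,\xi)N$, and using the defining relations \eqref{eq:N} for the null frame $\{\xi,N\}$, one computes $\bar g(V,V)$ directly. Since $\bar g(\xi,\xi)=\bar g(N,N)=\bar g(\xi,X)=\bar g(N,X)=0$ for $X\in\Gamma(S(TM))$ and $\bar g(\xi,N)=1$, the cross terms collapse and we obtain
\begin{equation}\label{eq:Vnormsplit}
\bar g(V,V)=g(\overset{*}{V},\overset{*}{V})+2\,\bar g(V,\xi)\,\bar g(V,N).
\end{equation}
First I would record \eqref{eq:Vnormsplit} as the backbone of the argument, noting that $g(\overset{*}{V},\overset{*}{V})=\bar g(\overset{*}{V},\overset{*}{V})\ge 0$ since the induced metric on $S(TM)$ is positive definite.

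Next I would divide \eqref{eq:Vnormsplit} through by $\bar g(V,V)$, recalling $\vert V\vert^2=\vert\bar g(V,V)\vert$ and writing $\varepsilon_V=\pm 1$ for the sign of $\bar g(V,V)$. This yields the normalized identity
\[
\varepsilon_V=g\!\left(\frac{\overset{*}{V}}{\vert V\vert},\frac{\overset{*}{V}}{\vert V\vert}\right)+2\,\measuredangle(V,\xi)\,\measuredangle(V,N),
\]
where the second summand is exactly the product of the two angle functions from \eqref{eq:angf}. This single relation makes the equivalence of \eqref{c02} and \eqref{c03} immediate: since $\varepsilon_V$ is a fixed constant, the quantity $g(\overset{*}{V}/\vert V\vert,\overset{*}{V}/\vert V\vert)$ is constant if and only if the product $\measuredangle(V,\xi)\measuredangle(V,N)$ is constant. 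Crucially, the left side $\overset{*}{V}/\vert V\vert$ is the $S(TM)$-projection of $V/\vert V\vert$, which is \emph{independent of any rescaling} of the frame $\{\xi,N\}$ via \eqref{eq:gauge}; hence the identity also shows that the product $\measuredangle(V,\xi)\measuredangle(V,N)$ is itself gauge-invariant, justifying the phrase ``for any choice'' in \eqref{c02}.

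It remains to link \eqref{c01} with the other two. That \eqref{c01} implies \eqref{c02} is trivial: if some frame makes each factor individually constant, their product is constant, and by gauge-invariance this holds for every frame. For the converse \eqref{c02}$\Rightarrow$\eqref{c01}, I would exhibit a rescaling that separates the product. Starting from any $\{\xi,N\}$ for which $\measuredangle(V,\xi)\measuredangle(V,N)=c$ is constant, I apply the gauge \eqref{eq:gauge} with a function $f$ chosen so that $\measuredangle(V,\xi^\prime)=\bar g(V,f\xi)/\vert V\vert=f\,\measuredangle(V,\xi)$ becomes a nonzero constant; the natural choice is $f=\vert V\vert/\bar g(V,\xi)$ wherever $\bar g(V,\xi)\neq 0$, giving $\measuredangle(V,\xi^\prime)\equiv 1$. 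Then $\measuredangle(V,N^\prime)=\measuredangle(V,N)/f=\measuredangle(V,\xi)\measuredangle(V,N)=c$ is also constant, so both angle functions are constant and \eqref{c01} holds.

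The main obstacle is the degenerate case $\bar g(V,\xi)=0$, where the rescaling $f=\vert V\vert/\bar g(V,\xi)$ fails. Here I expect to argue that $\bar g(V,\xi)\equiv 0$ on an open set forces (via \eqref{eq:Vnormsplit} and constancy of the product) the angle function $\measuredangle(V,\xi)$ to already be the constant $0$ there, so one instead normalizes through $N$, taking $f$ so that $\measuredangle(V,N^\prime)\equiv 1$; by the symmetry of \eqref{eq:Vnormsplit} in $\xi$ and $N$ the same computation applies. Handling the transition between the regions where $\bar g(V,\xi)$ and $\bar g(V,N)$ vanish --- and confirming they cannot vanish simultaneously, since that would force $\bar g(V,V)=g(\overset{*}{V},\overset{*}{V})>0$ while leaving $V$ with no transversal component, a case to be checked against $V$ being nowhere null --- is the delicate point requiring care.
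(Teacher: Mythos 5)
Your proposal is correct and is essentially the paper's own proof: the same identity $\bar g(V,V)=g(V^{*},V^{*})+2\,\bar g(V,\xi)\,\bar g(V,N)$, normalized by $\vert V\vert^{2}$, gives the equivalence of (2) and (3), and the same gauge rescaling gives (1) $\Leftrightarrow$ (2) --- your choice $f=1/\measuredangle(V,\xi)$ coincides with the paper's choice $f=\measuredangle(V,N)/k$ whenever $k\neq 0$. The degenerate situation you flag at the end (where $\bar g(V,\xi)$ or $\bar g(V,N)$ has zeros, so $k=0$ and the rescaling is inadmissible) is likewise left untreated by the paper, whose proof also divides by these quantities.
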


\begin{proof}
We first show the equivalence between conditions (\ref{c01}) and (\ref{c02}). Let us assume $\measuredangle (V, \xi )$ and $\measuredangle (V,N)$ are constant. Then for $\hat{\xi}=f\xi$ and $\hat{N}=N/f $ we have
that $\measuredangle (V,\hat{\xi} )\measuredangle (V,\hat{N})=\measuredangle (V,\xi )\measuredangle (V,N)$, so it is clearly a constant function.

Conversely, assume $k=\measuredangle (V,\xi )\measuredangle (V,N)$ is constant. Take 
\[
f=\frac{\measuredangle (V,N)}{k},
\]
hence
\[
\measuredangle (V,\hat{\xi}) = f\measuredangle (V,\xi )=1,\quad
\measuredangle (V,\hat{N}) =  \measuredangle (V,N)/f=k.
\]
To establish the equivalence between conditions (\ref{c02}) and (\ref{c03}) notice that 
\begin{equation}\label{eq:prods}
g(V,V)=g(V^*,V^*)+2g(V,\xi )g(V,N),
\end{equation}
thus
\[
g\left(\frac{V^*}{\vert V\vert} , \frac{V^*}{\vert V\vert}\right) = \frac{g(V,V)}{\vert V\vert^2}- 2\frac{g(V,\xi )}{\vert V\vert}\frac{g(V,N)}{\vert V\vert}
= \varepsilon_Z-2\measuredangle (V,\xi)\measuredangle (V, N)
\]
and the result follows immediately.

\end{proof}

Recall that in the semi-Riemannian setting, constant angle hypersurfaces can be defined via the squared norm of  $V^\top /\vert V\vert$, or equivalently, the squared norm of $V^\perp /\vert V\vert$; where $V^\top$ and $V^\perp$ denote the tangent and normal components of $V$.  Thus Lemma \ref{ca:easy} can be interpreted as a generalization to the null context of this classical result (see for instance Lemma 3.3 in \cite{NRS01}).

Furthermore, two exceptional cases arise naturally when dealing with non degenerate hypersurfaces $(M,g)$, namely, when the vector field $V$ is orthogonal to the hypersurface and when it is tangent to it. For instance, if the normal distribution to $V$ is integrable, then a unit normal vector field to one of its integral manifolds in necessarily collinear to $V$, and thus we can think of the hypersurface as making a zero angle with respect to $V$. This is for instance the case of hyperplanes or hyperquadrics in semi-Euclidean spaces when we consider parallel or radial vector fields\footnote{Recall that CC vector fields in semi-Euclidean spaces are necesarily either parallel or radial.}. On the other hand, if $V$ is tangent to the hypersurface $(M,g)$ then $V$ is orthogonal to any unit normal vector field, as it is indeed the case for cylinders ($V$ parallel) and cones ($V$ radial) in semi-Euclidean spaces.

\begin{example}\label{example:trivial}
In view of Lemma \ref{ca:easy} we have that the proper analogue for a null hypersurface $(M,g,S(TM))$ making a zero angle with respect to a vector field $V$ are precisely those for which $\overset{*}{V}=0$, that is, when $V\in \Gamma (S(TM)^\perp)$.  Notice that if we assume $V$ to be a CC vector field, then Proposition \ref{prop:totumb} ensures that  totally geodesic or totally umbilic null hypersurfaces fit in this description, in close analogy to the non degenerate case. Moreover, if we take a vector field $\zeta \in\Gamma (T\bar{M})$ transversal to $M$ and consider the associated screen distribution $S_\zeta(TM)$ as in Remark \ref{rem:stm}, then $\overset{*}{\zeta}=0$ and consequently $(M,g,S_\zeta (TM))$ makes a constant angle with respect to $\zeta$.
\end{example}

\begin{example}\label{example:caMink}
Let us consider a null hyperplane $\Pi\subset\mathbb{R}^{n+2}_1$ and a parallel vector field $V\in\Gamma (T\mathbb{R}^{n+2}_1)$. We can always choose $\xi$ to be parallel along $\Pi$, and thus $\measuredangle (V,\xi )$ is constant. Moreover, if we take $S^\prime (T\Pi )$ as the tangent spaces to a family of parallel planar sections of $\Pi$ then $N$ is also parallel along $M$ and thus $\measuredangle (V,N)$ is constant as well. Consequently, $(\Pi ,g, S^\prime (T\Pi ))$ is a constant angle null hypersurface of $\mathbb{R}^{n+2}_1$.

Conversely, assume $(\Pi ,g,S(T\Pi ))$ is a constant angle null hypersurface with respect to a parallel vector field $V$. Further, without loss of generality we can assume $V =e_0$ if $V$ is timelike, choose $\xi\in \Gamma (\textrm{rad} (TM))$ such that $\measuredangle (V,\xi )=\bar{g}(V,\xi )=1$.   Let $N=(N_0,N_1,\ldots , N_{n+1})$ such that $\measuredangle (N,V)= -N_0$ is constant. Thus $N_2^2+\cdots +N_{n+1}^2=1$ and 
\[
S(TM)=span \{e_2-N_2\xi ,\ldots , e_{n+1}-N_{n+1}\xi \}.
\]
Thus, for $n=1$ we have that $(\Pi ,g,S(TM))$ is constant angle hypersurface only if $S(TM)$ is planar, that is, $S(TM)_p$ is generated by the intersection of $T_pM$ and a hyperplane in the ambient space $\mathbb{R}^{m+2}_1$ passing through $p$ (see Figure \ref{fig:plano}). On the other hand, for $n\ge 2$ we have infinitely many non planar null  hypersurfaces.
\end{example}

\begin{figure}[ht!]\label{fig:plano}
\centering \includegraphics[scale=0.25]{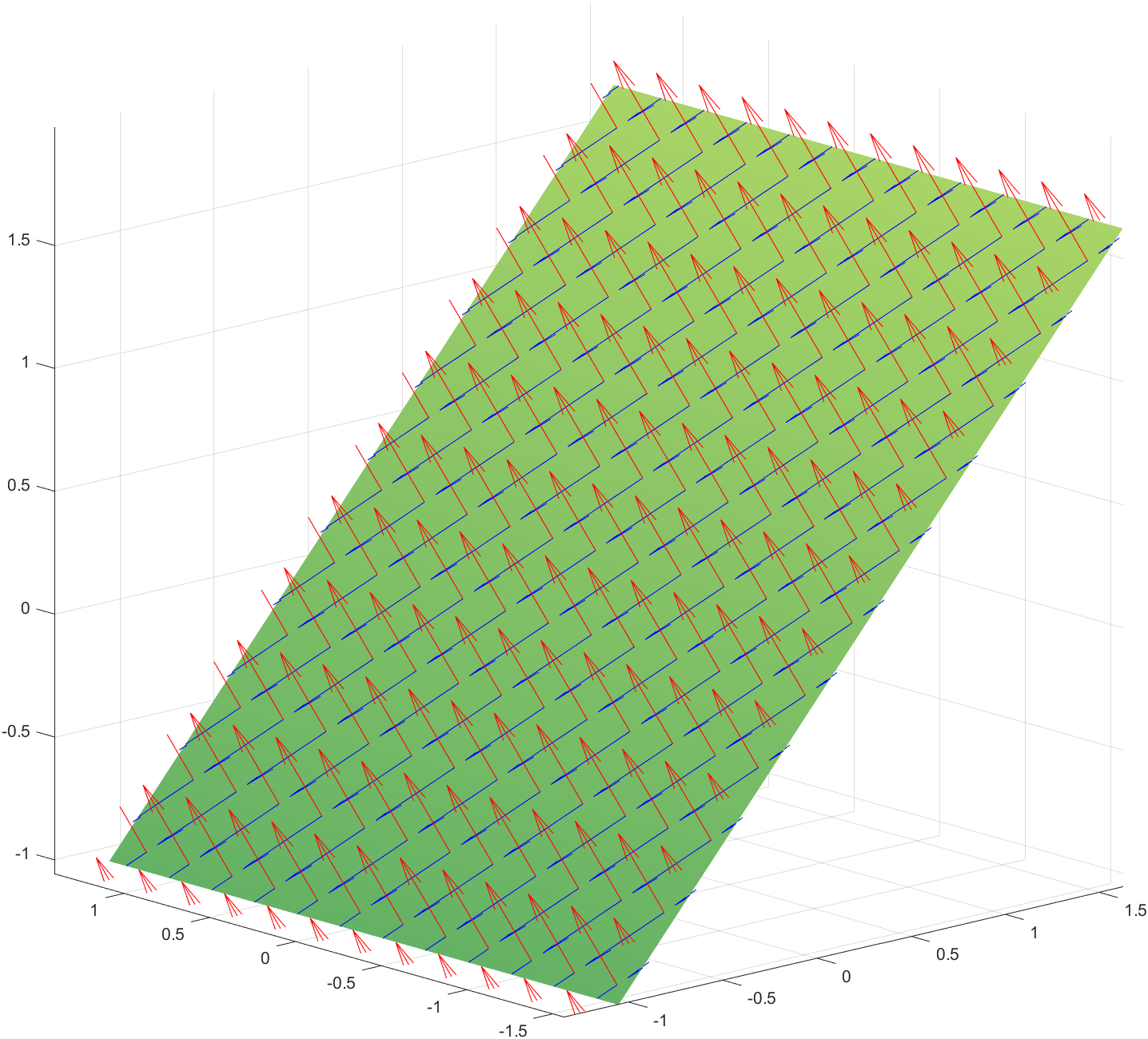}
\centering{\caption{A planar constant angle null hypersurface.}}
\end{figure}

\begin{example}\label{example:LightCone}
We now provide some examples of constant angle null hypersurfaces in the two-dimensional light cone $\Lambda^2_1$. Let $Z$ be a timelike parallel vector field along $\Lambda^{2}_1 \subset \mathbb{R}^{3}_1$. Without loss of generality, we can assume $Z=e_0$. Let $\Phi: \mathbb{R}^2 \rightarrow \Lambda^{2}_1 $ given by  $\Phi(a_1,a_2) = (\sqrt{a_1^2+a_2^2}, a_1,a_2)$ be a graph parametrization of the light cone, then 
\[\xi = \Phi/\sqrt{a_1^2+a_2^2}-2e_0 = \left( -1, \dfrac{a_1}{\sqrt{a_1^2+a_2^2}}, \dfrac{a_2}{\sqrt{a_1^2+a_2^2}} \right)\]
satisfies $g(Z, \xi)=1$. Moreover, by taking the screen distribution corresponding to 
\[N = \left( N_0, \dfrac{a_1(1-N_0) + a_2\sqrt{-1+2N_0}}{\sqrt{a_1^2+a_2^2}},\dfrac{a_2(1-N_0) - a_1\sqrt{-1+2N_0}}{\sqrt{a_1^2+a_2^2}} \right)\]
we obtain a null hypersurface $(\Lambda_2^1,g,S(T\Lambda_2^1))$ having a constant angle with respect to  $Z$.
If $N_0 = \frac{1}{2}$ then 
\[N = \left( \frac{1}{2}, \frac{a_1}{2\sqrt{a_1^2+a_2^2}}, \frac{a_2}{2\sqrt{a_1^2+a_2^2}} \right) =\frac{1}{2} \xi - e_0,\]
which corresponds to the trivial case depicted in Example \ref{example:trivial}. Notice also that the resulting screen is planar.
However, if $N_0 < -\dfrac{1}{2}$, then 
\[S(T\Lambda^{2}_1) = span \{ e_0 + \frac{a_1 \sqrt{-1-2N_0} -a_2}{\sqrt{-1-2N_0}\sqrt{a_1^2+a_2^2}} e_1 + \frac{a_1 + a_2 \sqrt{-1-2N_0}}{\sqrt{-1-2N_0}\sqrt{a_1^2+a_2^2}} e_2 \} ,\]
which is a non trivial and non planar screen (see Figure \ref{fig:cono}).
\end{example}

\begin{figure}[ht!]\label{fig:cono}
\centering \includegraphics[scale=0.25]{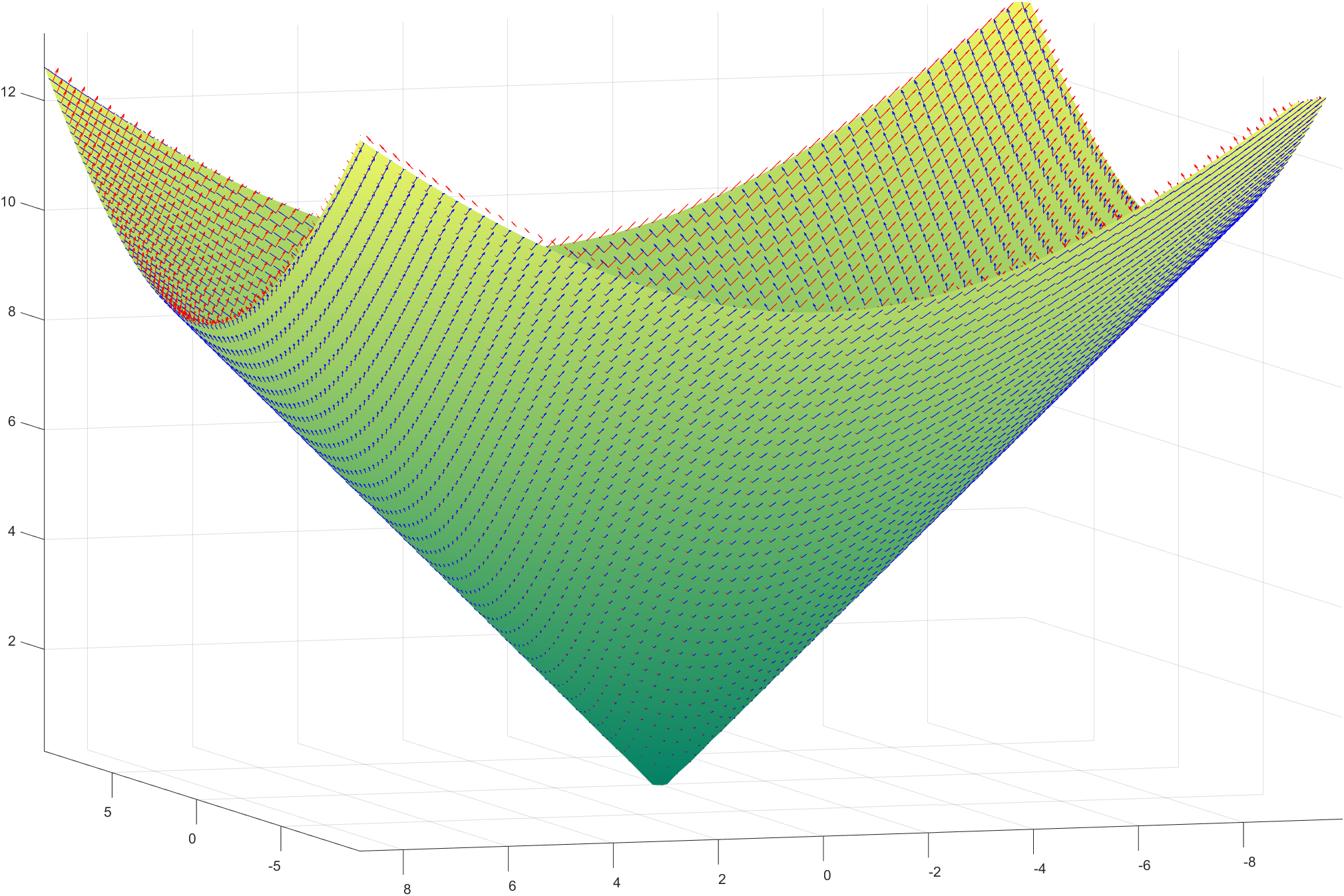}
\centering{\caption{A non planar constant angle null hypersurface.}}
\end{figure}

\section{Canonical principal directions}\label{S2}

Originally proposed by F. Dillen \cite{Dillen10}, the concept of canonical principal direction is closely related to constant angle hypersurfaces. Indeed, in several different settings we have that the preferred vector field gives rise to a principal direction when projected to the hypersurface. 
The precise definition is as follows.

\begin{definition}
Let $(\bar{M},\bar{g})$ be a semi-Riemannian manifold, $(N,h)$ a semi-Riemannian manifold of it, and $Z\in\Gamma (T\bar{M})$ a vector field along $N$. We say $Z$ is a {\em canonical principal direction} if its projection $Z^\top\in \Gamma (TN)$ is a principal direction  for some normal vector field $\mathbf{n}\in \Gamma (TN^\perp)$.   
Let $(\bar{M},\bar{g})$ be a semi-Riemannian manifold, $(N,h)$ a semi-Riemannian manifold of it, and $Z\in\Gamma (T\bar{M})$ a vector field along $N$. We say $Z$ is a {\em canonical principal direction} if its projection $Z^\top\in \Gamma (TN)$ is a principal direction  for some normal vector field $\mathbf{n}\in \Gamma (TN^\perp)$.   
\end{definition}

As it turns out, there is a close connection between CC vector fields and canonical principal directions.  For CC vector fields we have the following equivalences regarding a canonical principal direction.




\begin{lemma}\label{lemma:cpd}
Let $ ( M ,g, S ( T M )) $ be a screen integrable null hypersurface of $ (\overline { M },\overline{g})$  and $Z\in\Gamma (T\bar{M})$ a CC vector field along $M$. The following are equivalent
\begin{enumerate}
	\item $ (M,g,S ( T M )) $ has a canonical principal direction with respect to $A_{Z^\perp} $.
 \item  $\bar{g} (Z,\xi )\bar{g} (N,\xi )$ is constant in directions tangent to $ S ( T M ) $ and orthogonal to $ \overset { * } { Z } $.
 \item There exists a null frame $\{\xi ,N\}$ such that $\bar{g} (Z,\xi )$ and $\bar{g} (N,\xi )$ are constant in directions tangent to $ S ( T M ) $ and orthogonal to $ \overset { * } { Z } $.
	\item $ \bar{ g } ( \overset{*}{Z} , \overset{*}{Z} ) $ is constant in directions tangent to $ S ( T M ) $ and orthogonal to $ \overset { * } { Z } $.
\end{enumerate}
\end{lemma}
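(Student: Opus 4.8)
The plan is to route all three equivalences through statement (2), treating it as the hub and using as the central tool the identity furnished by Lemma \ref{lemma:zperp},
\[
X\cdot\big(\bar{g}(Z,\xi)\,\bar{g}(Z,N)\big) = -\,\bar{g}\big(X, A_{Z^\perp}\overset{*}{Z}\big), \qquad X\in\Gamma(S(TM)).
\]
First I would establish (1) $\Leftrightarrow$ (2). By \eqref{eq:shapeops} the leaf shape operator is $A_{Z^\perp} = \bar{g}(Z,N)\,A^*_\xi + \bar{g}(Z,\xi)\,A_N$, which is $S(TM)$-valued by Proposition \ref{prop:general}; moreover $\bar g$ restricts to a positive definite metric on the integrable screen. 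Hence $A_{Z^\perp}\overset{*}{Z}$ lies in $S(TM)$, and $\overset{*}{Z}$ is an eigenvector of $A_{Z^\perp}$ ---that is, a canonical principal direction--- exactly when $A_{Z^\perp}\overset{*}{Z}$ is $\bar g$-orthogonal to every $X\in\Gamma(S(TM))$ with $\bar g(X,\overset{*}{Z})=0$. By the displayed identity this orthogonality is equivalent to the vanishing of $X\cdot(\bar g(Z,\xi)\bar g(Z,N))$ along all such $X$, which is precisely the constancy demanded in (2).

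Next I would obtain (2) $\Leftrightarrow$ (4) from the orthogonal decomposition \eqref{eq:prods}, rewritten as $\bar g(\overset{*}{Z},\overset{*}{Z}) = \bar g(Z,Z) - 2\,\bar g(Z,\xi)\bar g(Z,N)$. The key observation is that $\bar g(Z,Z)$ is automatically constant in the relevant directions: for $X\in\Gamma(S(TM))$ one has $\bar g(X,Z)=\bar g(X,\overset{*}{Z})$, since $Z_\xi\in\mathrm{rad}(TM)$ and $Z_N\in\mathrm{tr}(TM)$ are both $\bar g$-orthogonal to the screen, so using that $Z$ is CC,
\[
X\cdot\bar g(Z,Z) = 2\,\bar g(\bar\nabla_XZ,Z) = 2\varphi\,\bar g(X,\overset{*}{Z}) = 0
\]
whenever $\bar g(X,\overset{*}{Z})=0$. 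Consequently $\bar g(\overset{*}{Z},\overset{*}{Z})$ and $\bar g(Z,\xi)\bar g(Z,N)$ have the same directional derivatives along directions tangent to $S(TM)$ and orthogonal to $\overset{*}{Z}$, so one is constant there iff the other is.

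Finally I would treat (2) $\Leftrightarrow$ (3). The implication (3) $\Rightarrow$ (2) is immediate from the Leibniz rule, since the product of two functions each with vanishing derivative along the prescribed directions inherits that property, and $\bar g(Z,\xi)\bar g(Z,N)$ is invariant under the rescaling gauge \eqref{eq:gauge}. For (2) $\Rightarrow$ (3) I would exploit that same gauge freedom: set $f = 1/\bar g(Z,\xi)$ and pass to $\xi'=f\xi$, $N'=N/f$, so that $\bar g(Z,\xi')=1$ is constant while $\bar g(Z,N')=\bar g(Z,\xi)\bar g(Z,N)$ is constant along the required directions by (2). I expect the main obstacle to be precisely this step: one must verify the rescaling is admissible (smooth and nowhere vanishing) and handle the degenerate locus $\bar g(Z,\xi)=0$, which is exactly where $Z$ is tangent to $M$ (as $\bar g(Z,\xi)=0$ forces $Z_N=0$). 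This is where non-tangency of $Z$, or a localization argument away from that locus, carries the decisive weight; the remaining equivalences reduce to the bookkeeping identities above.
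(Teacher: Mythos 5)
Your proposal is correct and follows essentially the same route as the paper's (very terse) proof: both hub all equivalences through statement (2), invoking Lemma \ref{lemma:zperp} for (1)$\Leftrightarrow$(2), the decomposition \eqref{eq:prods} together with the closed conformal property (Lemma \ref{prop:ccc}) for (2)$\Leftrightarrow$(4), and the gauge invariance of $\bar{g}(Z,\xi)\,\bar{g}(Z,N)$ under the rescaling \eqref{eq:gauge} for (2)$\Leftrightarrow$(3). Your added attention to the locus $\bar{g}(Z,\xi)=0$ (where $Z$ is tangent to $M$ and the rescaling $f=1/\bar{g}(Z,\xi)$ degenerates) is a point the paper's one-line argument silently skips, so your write-up is, if anything, more careful than the original.
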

\begin{proof}
Notice that the function $\bar{g}(Z,\xi)\bar{g}(Z,\xi )$ is invariant under the scaling of Eq. \ref{eq:gauge}, which readily establishes de equivalence between (2) and (3). Moreover, (2) and (4) are equivalent in virtue of Eq. \eqref{eq:prods} and Lemma \ref{prop:ccc}. Finally (1) $\Leftrightarrow$ (2) by Lemma \ref{lemma:zperp}.
\end{proof}

We now establish the fundamental result that describes the interplay between constant angle null hypersurfaces and canonical principal directions.

\begin{theorem}\label{theoprincdirec}
Let $ ( M , g,S ( T M ) ) $ be a screen integrable null hypersurface of $ (\bar { M } ,\bar{g}) $ and $Z\in\Gamma (T\bar{M})$ a CC vector field. If $ ( M , g,S ( T M ) ) $ has constant angle respect to $ Z $, then $Z$ is a canonical principal direction with respect to $ A_{Z^\perp}$. Moreover, 
\[
	A_{Z^\perp}\overset{*}{Z}= 2\varepsilon_Z \measuredangle (Z,\xi )\measuredangle (Z,N)\varphi \overset{*}{Z}.
\]
\end{theorem}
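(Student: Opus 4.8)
The plan is to show that constant angle implies the canonical principal direction property by combining Lemma~\ref{ca:easy} with Lemma~\ref{lemma:cpd}, and then to verify the explicit formula directly from Lemma~\ref{lemma:zperp} and Lemma~\ref{eqGrads}. First I would observe that by Lemma~\ref{ca:easy}, constant angle with respect to $Z$ is equivalent to the constancy of $g(Z^*/\vert Z\vert,Z^*/\vert Z\vert)$ along all of $M$, which in particular is constant in every direction tangent to $S(TM)$. This is strictly stronger than condition (4) of Lemma~\ref{lemma:cpd}, which only requires constancy in directions orthogonal to $\overset{*}{Z}$. Hence the constant angle hypothesis furnishes condition (4), and Lemma~\ref{lemma:cpd} immediately yields that $Z$ is a canonical principal direction with respect to $A_{Z^\perp}$, establishing the first assertion.

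For the quantitative formula, the approach is to compute $A_{Z^\perp}\overset{*}{Z}$ by pairing it against an arbitrary $X\in\Gamma(S(TM))$ and invoking Lemma~\ref{lemma:zperp}, which gives
\[
\bar{g}(X,A_{Z^\perp}\overset{*}{Z}) = -X\cdot\bigl(\bar{g}(X,\xi)\bar{g}(X,N)\bigr).
\]
Wait---here the left argument and the differentiated argument must be the same tangent field, so more carefully I would apply Lemma~\ref{lemma:zperp} with the understanding that it controls directional derivatives of the product $\bar{g}(\cdot,\xi)\bar{g}(\cdot,N)$. The cleaner route is to differentiate $\bar{g}(Z,Z)$ rather than the product of angle functions. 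Using Lemma~\ref{eqGrads}, the screen gradient of $\vert Z\vert$ is $\overset{*}{\nabla}\vert Z\vert = (\varepsilon_Z\varphi/\vert Z\vert)\overset{*}{Z}$, so $\overset{*}{\nabla}\bar{g}(Z,Z) = 2\varphi\overset{*}{Z}$. I would then relate the screen gradient of the full squared norm to the screen gradient of $\bar{g}(Z^*,Z^*)$ via Eq.~\eqref{eq:prods}, namely $\bar{g}(Z,Z) = \bar{g}(Z^*,Z^*) + 2\bar{g}(Z,\xi)\bar{g}(Z,N)$. Since constant angle forces $\bar{g}(Z^*,Z^*)/\vert Z\vert^2$ to be constant, i.e. $\bar{g}(Z^*,Z^*) = (\varepsilon_Z - 2\measuredangle(Z,\xi)\measuredangle(Z,N))\vert Z\vert^2$, I can express the screen gradient of the product $\bar{g}(Z,\xi)\bar{g}(Z,N)$ in terms of $\overset{*}{\nabla}\bar{g}(Z,Z)$, which is $2\varphi\overset{*}{Z}$.

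Carrying this out, differentiating Eq.~\eqref{eq:prods} along $S(TM)$ and solving for the screen gradient of $\bar{g}(Z,\xi)\bar{g}(Z,N)$ gives a scalar multiple of $\overset{*}{Z}$; feeding this back into Lemma~\ref{lemma:zperp} produces $A_{Z^\perp}\overset{*}{Z}$ as a multiple of $\overset{*}{Z}$, with the coefficient assembling into $2\varepsilon_Z\measuredangle(Z,\xi)\measuredangle(Z,N)\varphi$ after substituting the constant angle identity $\measuredangle(Z,\xi)\measuredangle(Z,N) = \tfrac{1}{2}(\varepsilon_Z - \bar{g}(Z^*,Z^*)/\vert Z\vert^2)$. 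Since this holds when tested against every $X\in\Gamma(S(TM))$ and $A_{Z^\perp}$ is $S(TM)$-valued, the vector identity follows.

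The main obstacle I anticipate is bookkeeping the normalization and the $\vert Z\vert$ factors correctly: Lemma~\ref{lemma:zperp} and Lemma~\ref{eqGrads} are stated for unnormalized quantities, whereas the angle functions $\measuredangle$ carry division by $\vert Z\vert$, so I must track the Leibniz-rule contributions of $X\cdot\vert Z\vert$ and cancel them using Lemma~\ref{prop:ccc} (which guarantees $X\cdot\vert Z\vert = 0$ for $X\perp Z$, but \emph{not} for arbitrary $X\in S(TM)$). Resolving whether the constant angle condition is being exploited along all screen directions or only those orthogonal to $\overset{*}{Z}$ is the delicate point, since the target formula is a genuine vector identity valid on all of $\overset{*}{Z}$'s span, and I will need the full strength of Lemma~\ref{ca:easy} rather than the weaker orthogonal-direction version appearing in Lemma~\ref{lemma:cpd}.
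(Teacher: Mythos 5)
Your proposal is correct and takes essentially the same route as the paper: the qualitative claim follows from Lemma \ref{ca:easy} combined with Lemma \ref{prop:ccc} and Lemma \ref{lemma:cpd}, and the eigenvalue formula comes from computing the screen gradient of $\bar g(Z,\xi)\bar g(Z,N)$ and feeding it into Lemma \ref{lemma:zperp}. The only difference is cosmetic: the paper differentiates the two constant angle functions separately via Lemma \ref{eqGrads}, whereas you rewrite the product as the constant $\varepsilon_Z\measuredangle(Z,\xi)\measuredangle(Z,N)$ times $\bar g(Z,Z)$ using Eq. \eqref{eq:prods} and differentiate $\bar g(Z,Z)$ instead --- a frame-invariant variant which, in fact, automatically disposes of the $X\cdot\vert Z\vert$ bookkeeping you flag as the delicate point at the end, since no normalization by $\vert Z\vert$ ever needs to be differentiated.
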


\begin{proof}
	By Lemmas \ref{ca:easy} and \ref{prop:ccc} we have that $\bar{g}(Z,\xi )\bar{g}(Z,N)$ is constant in directions tangent to $S(TM)$ and orthogonal to $\overset{*}{Z}$, hence $\overset{*}{Z}$ is a principal direction of $A_{Z^\perp}$ in virtue of \ref{lemma:cpd}.
 In order to compute the principal value of $\overset{*}{Z}$, assume $\measuredangle (Z,N)$ is constant. Therefore
		\[ 
		0 = \overset { * } { \nabla } \measuredangle (Z,N) 
		= - \overline { g } ( Z , N ) \frac { 1 } { \lvert Z \rvert ^ 2 } \overset { * } { \nabla } \lvert Z \rvert + \frac { 1 } { \lvert Z \rvert } \overset { * } { \nabla } g ( Z , N ),
		\]
	and thus by Lemma \ref{eqGrads} we have
	\[
	\overset { * } { \nabla } \bar{g} ( Z , N )=\frac{\bar{g}(Z,N)}{\vert Z\vert }\overset{*}{\nabla}\vert Z\vert =\frac{\bar{g}(Z,N)}{\vert Z\vert } \left( \frac { \varepsilon _ Z \varphi } { \lvert Z \rvert } \overset { * } { Z } \right) = \varepsilon_Z\frac{\bar{g}(Z,N)}{\vert Z\vert^2}\varphi\overset{*}{Z},
	\]
and similarly, 
\[
	\overset { * } { \nabla } \bar{g} ( Z , \xi )=\varepsilon_Z\frac{\bar{g}(Z,\xi)}{\vert Z\vert^2}\varphi\overset{*}{Z}.
\]
Thus 
\[
\overset{*}{\nabla}(\bar{g}(Z,\xi )\bar{g}(Z,N))=2\varepsilon_Z\frac{\bar{g}(Z,\xi)\bar{g}(Z,N)}{\vert Z\vert^2}\varphi\overset{*}{Z}=2\varepsilon_Z \measuredangle (Z,\xi )\measuredangle (Z,N)\varphi \overset{*}{Z}
\]
and the result follows from Lemma \ref{lemma:zperp}.
\end{proof}

Recall that for parallel vector field we have $\varphi \equiv 0$
thus we immediately derive from Theorem \ref{theoprincdirec} the following result.

\begin{corollary}\label{cor:AZ0}
Let $ ( M , g,S ( T M ) ) $ be a screen integrable null hypersurface of $ (\bar { M } ,\bar{g}) $ and  $Z\in\Gamma (T\bar{M})$ a parallel vector field along $M$. If $ ( M , g,S ( T M ) ) $ has constant angle respect to $ Z $, then 
\[
A_{Z^\perp}^*\overset{*}{T}=0.
\]
\end{corollary}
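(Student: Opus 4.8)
The plan is to specialize Theorem~\ref{theoprincdirec} to the parallel case, where $\varphi\equiv 0$. Since $Z$ is parallel it is in particular a CC vector field (with associated function $\varphi\equiv 0$), so Theorem~\ref{theoprincdirec} applies verbatim and yields
\[
A_{Z^\perp}\overset{*}{Z}= 2\varepsilon_Z \measuredangle (Z,\xi )\measuredangle (Z,N)\,\varphi\, \overset{*}{Z}=0,
\]
because the right-hand side carries a factor of $\varphi$. The only genuine content of the corollary is therefore to relate the operator $A_{Z^\perp}$ appearing in the theorem to the screen shape operator $A^*_{\bullet}$ appearing in the statement, and to identify the vector $\overset{*}{T}$ with $\overset{*}{Z}$.

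First I would clarify the notation: I read $\overset{*}{T}$ as $\overset{*}{Z}$, the screen projection of $Z$ (the symbol $T$ presumably being a typo or a leftover name for the canonical tangent direction $Z^\top$), so the claim is simply that $A_{Z^\perp}\overset{*}{Z}=0$. Next I would invoke Eq.~\eqref{eq:shapeops} from Remark~\ref{rem:LC}, which decomposes the codimension-two shape operator as $A_{Z^\perp}X=A_{Z_\xi}X+A_{Z_N}X$ for $X\in\Gamma(S(TM))$; applying this with $X=\overset{*}{Z}$ and then using Theorem~\ref{theoprincdirec} gives the vanishing directly. Because $\varphi\equiv 0$, every principal value computed in the proof of Theorem~\ref{theoprincdirec} collapses to zero, which is exactly the mechanism producing the conclusion.

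The step I expect to require the most care is reconciling the operator in the statement, written with a star as $A^*_{Z^\perp}$, with the unstarred $A_{Z^\perp}$ of the theorem. The starred notation suggests the screen shape operator, but $A_{Z^\perp}$ is the codimension-two shape operator $\widehat{A}_{Z^\perp}$ of a leaf of $S(TM)$ as introduced in Remark~\ref{rem:LC}; since $Z$ is parallel, $Z^\perp=Z^\perp$ is itself parallel along the leaf and the distinction between its $\mathrm{rad}(TM)$- and $\mathrm{tr}(TM)$-contributions is immaterial once both shape operators annihilate $\overset{*}{Z}$. I would therefore state explicitly that the corollary follows by setting $\varphi=0$ in the displayed formula of Theorem~\ref{theoprincdirec}, so that $A_{Z^\perp}\overset{*}{Z}=0$, and that the star in the statement should be read consistently with the operator $A_{Z^\perp}$ of the theorem. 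If instead one insists on the screen operator $A^*_\xi$, then one additionally notes that the parallel hypothesis forces $Z_\xi$ and $Z_N$ contributions to vanish separately via parts (b) and (c) of Lemma~\ref{eqcomponents} with $\varphi=0$, again giving the result.
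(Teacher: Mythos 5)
Your proposal is correct and takes essentially the same route as the paper, which likewise derives the corollary by specializing Theorem~\ref{theoprincdirec} to a parallel field (a CC field with $\varphi\equiv 0$), so the displayed principal value vanishes; the statement's $A^*_{Z^\perp}\overset{*}{T}$ is indeed a notational slip for the theorem's $A_{Z^\perp}\overset{*}{Z}$. The only inaccuracy is in your fallback remark: under constant angle with $\varphi\equiv 0$, Lemma~\ref{eqcomponents}(b),(c) give $C(X,\overset{*}{Z})=\bar g(Z,N)\tau(X)$ and $B(X,\overset{*}{Z})=-\bar g(Z,\xi)\tau(X)$, so the $Z_\xi$- and $Z_N$-contributions to $\bar g(A_{Z^\perp}\overset{*}{Z},X)$ cancel through the $\tau$ terms rather than vanishing separately (unless $\tau$ vanishes on $S(TM)$) --- a harmless point, since your primary argument stands on its own.
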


\section{Applications}

In this section we analyze two relevant cases. First we focus on parallel vector fields.

\begin{corollary}\label{cor:geodesico}
Let $ ( M , g,S ( T M ) ) $ be a screen inegrable null hypersurface of $ (\bar { M } \bar{M})$, $Z\in\Gamma (T\bar{M})$ a CC vector field along $M$ and $T=\overset{*}{Z}/\vert \overset{*}{Z} \vert$. If $ (M,g,S ( T M ) )$  has a canonical principal direction respect to $A_{Z^\perp}$, then
\begin{enumerate}
\item  $T$  is a geodesic vector field on $S(TM)$.
\item  The eigenvalue of $T$ is 
\[
\lambda =g(\overset{*}{\nabla}_T\overset{*}{Z},T)-\varphi
\]
\end{enumerate}
\end{corollary}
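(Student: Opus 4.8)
The plan is to distill both claims from a single identity tying the codimension-two screen shape operator $A_{Z^\perp}$ to the screen-covariant derivative of $\overset{*}{Z}$. First I would note that, since $S(TM)$ is integrable, Eq.~\eqref{eq:shapeops} (exactly as used in the proof of Lemma~\ref{lemma:zperp}) lets me write $A_{Z^\perp}=\bar{g}(Z,N)A^*_\xi+\bar{g}(Z,\xi)A_N$ as an operator on $S(TM)$. Feeding this into Lemma~\ref{eqcomponents}(a) collapses that identity to
\[
A_{Z^\perp}X=\overset{*}{\nabla}_X\overset{*}{Z}-\varphi X,\qquad X\in\Gamma(S(TM)),
\]
which will drive the entire argument.

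The canonical principal direction hypothesis means exactly that $\overset{*}{Z}$, hence its unit rescaling $T$, is an eigenvector of $A_{Z^\perp}$, say $A_{Z^\perp}T=\lambda T$ (this is the content of Lemma~\ref{lemma:cpd}). For claim (2) I would evaluate the displayed identity at $X=T$ to obtain $\overset{*}{\nabla}_T\overset{*}{Z}=A_{Z^\perp}T+\varphi T=(\lambda+\varphi)T$, and then pair with $T$. Using $g(T,T)=1$ this yields $g(\overset{*}{\nabla}_T\overset{*}{Z},T)=\lambda+\varphi$, which is precisely the asserted eigenvalue formula $\lambda=g(\overset{*}{\nabla}_T\overset{*}{Z},T)-\varphi$.

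For claim (1) I would set $\rho=\vert\overset{*}{Z}\vert$, so that $\overset{*}{Z}=\rho T$, and evaluate the identity along $\overset{*}{Z}$ itself, obtaining $\overset{*}{\nabla}_{\overset{*}{Z}}\overset{*}{Z}=(\lambda+\varphi)\overset{*}{Z}$. Expanding the left-hand side by the Leibniz rule as $\rho(T\cdot\rho)T+\rho^2\,\overset{*}{\nabla}_T T$ and comparing coefficients shows that $\overset{*}{\nabla}_T T$ is proportional to $T$. The one step deserving care is to upgrade this to $\overset{*}{\nabla}_T T=0$: because $\overset{*}{\nabla}=\nabla^*$ is a metric connection on $S(TM)$ and $g(T,T)=1$, differentiating along $T$ forces $g(\overset{*}{\nabla}_T T,T)=0$, so the proportionality constant must vanish and $T$ is a geodesic vector field.

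I expect no serious obstacle in this argument; the delicate points are only bookkeeping. Specifically, one must keep track of which shape operators carry a star when invoking Eq.~\eqref{eq:shapeops} (so that $A_{Z^\perp}$ is correctly recognized as $\bar{g}(Z,N)A^*_\xi+\bar{g}(Z,\xi)A_N$), exploit screen integrability to guarantee that $A_{Z^\perp}$ is a genuine symmetric operator for which the eigenvector interpretation is meaningful, and ensure $\rho\neq 0$ wherever $T$ is defined. The metric-connection computation in the last paragraph is where the geometric force of the statement really lies.
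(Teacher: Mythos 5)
Your proposal is correct and follows essentially the same route as the paper: both reduce Lemma \ref{eqcomponents}(a) via Eq.~\eqref{eq:shapeops} to the identity $A_{Z^\perp}X=\overset{*}{\nabla}_X\overset{*}{Z}-\varphi X$ on $\Gamma(S(TM))$, apply the eigenvector hypothesis to $T=\overset{*}{Z}/\vert\overset{*}{Z}\vert$, and conclude using the Leibniz rule together with $g(\overset{*}{\nabla}_TT,T)=0$ from metric compatibility. The only difference is organizational --- the paper extracts both claims from a single computation of $\overset{*}{\nabla}_TT$, whereas you separate them --- and your version incidentally avoids a harmless sign typo in the paper's displayed computation.
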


\begin{proof}
Let $\lambda$ be a smooth function such that $A_{Z^\perp}\overset{*}{Z}=\lambda \overset{*}{Z}$. Thus by Lemma \ref{eqcomponents} we have
\begin{eqnarray*}
{\overset{*}{\nabla}}_TT &=&  T\cdot \frac{1}{\vert \overset{*}{Z}\vert  } \overset{*}{Z}+\frac{1}{\vert \overset{*}{Z}\vert }\overset{*}{\nabla}_T\overset{*}{Z}\\
&=& \vert\overset{*}{Z}\vert \, T\cdot \frac{1}{\vert \overset{*}{Z}\vert } T+\frac{1}{\vert \overset{*}{Z}\vert  } \, (\bar{g}(Z,N)A^*_\xi T+\bar{g}(Z,\xi )A_N T+\varphi T)\\
&=& -\vert\overset{*}{Z}\vert \, \frac{1}{\vert \overset{*}{Z}\vert^2}g(\overset{*}{\nabla}_T\overset{*}{Z},T)T +\frac{1}{\vert \overset{*}{Z}\vert  } \, (A_{Z^\perp}T+\varphi T)\\
&=&-\frac{1}{\vert \overset{*}{Z}\vert  } ( g(\overset{*}{\nabla}_T\overset{*}{Z},T)+\lambda +\varphi ) T\\.
\end{eqnarray*}
On the other hand, since $T$ has unit length it follows $\bar{g}({\overset{*}{\nabla}}_TT,T)=0$ which then implies 
\[
-g(\overset{*}{\nabla}_T\overset{*}{Z},T)+\lambda +\varphi =0.
\]
Thus ${\overset{*}{\nabla}}_TT=0$, so $T$ is a geodesic vector field. 
\end{proof}

Notice that Theorem \ref{theoprincdirec} coupled with Corollary \ref{cor:geodesico}  provide insight in the subtle relation between geodesic vector fields and principal directions. Even for the case of surfaces immersed in $\mathbb{R}^3$ the classification of surfaces for which their principal lines are also geodesics is still open. Clearly, in every totally geodesic surface lines of curvature and geodesics agree. However,  in a right circular cylinder every line of curvature is a geodesic, whereas in a regular torus there are curvature lines which are not geodesics, such as parallel curves distinct from equator. 
Examples of surfaces in which lines of curvature are geodesics include Monge and Molding surfaces \cite{BG}. Moreover, in \cite{Ando} N. Ando provides  conditions for a curvature line in a so-called parallel curved surface to be a geodesic. Further he gives a local characterization of the metric in order that a foliation of lines of curvature is given by geodesics.

We now explore two applications to Lorentzian manifolds of dimension $4$.

\begin{theorem}
Let $ ( M ,g, S ( T M ) ) $ be a screen integrable null hypersurface having a constant angle with respect to a parallel vector field $Z\in\Gamma (T\bar{M})$ along $M$. If $\mathrm{tr} A_{Z^\perp}\equiv 0$, then $S(TM)$ is flat.
   \end{theorem}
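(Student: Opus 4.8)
The plan is to show that the screen--tangential projection $\overset{*}{Z}$ descends to a nowhere-vanishing parallel vector field on each leaf of $S(TM)$, and then to invoke the classical fact that a Riemannian surface carrying a parallel field is flat. The assumption $\dim\bar{M}=4$ is decisive here: it makes each leaf of the integrable distribution $S(TM)$ a $2$-dimensional Riemannian manifold, so that $S(TM)$ is flat precisely when these leaves have vanishing Gauss curvature. Throughout I work in the nontrivial configuration $\overset{*}{Z}\neq 0$; otherwise $Z\in\Gamma(S(TM)^\perp)$ and we are in the trivial angle of Example \ref{example:trivial}.

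First I would pin down the shape operator $A_{Z^\perp}$ completely. Since $Z$ is parallel we have $\varphi\equiv 0$, so Theorem \ref{theoprincdirec} gives $A_{Z^\perp}\overset{*}{Z}=0$; thus $\overset{*}{Z}$ is an eigenvector of $A_{Z^\perp}$ with eigenvalue $0$. Because $S(TM)$ is integrable, both $A^*_\xi$ and $A_N$ are $g$-self-adjoint on the positive-definite rank-two bundle $S(TM)$ (Proposition \ref{prop:general}), hence so is their combination $A_{Z^\perp}=\bar{g}(Z,N)A^*_\xi+\bar{g}(Z,\xi)A_N$. A self-adjoint endomorphism of a $2$-dimensional positive-definite space with a zero eigenvalue and vanishing trace is identically zero; the hypothesis $\operatorname{tr}A_{Z^\perp}\equiv 0$ therefore upgrades $A_{Z^\perp}\overset{*}{Z}=0$ to $A_{Z^\perp}\equiv 0$.

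Next I would restrict the defining equation $\bar{\nabla}_XZ=0$ to a leaf $\widehat{M}$ of $S(TM)$, regarded as a codimension-two submanifold of $\bar{M}$. Writing $Z=\overset{*}{Z}+Z^\perp$ and applying the Gauss--Weingarten formula \eqref{eq:codim2} together with the identification $\widehat{A}_{Z^\perp}=A_{Z^\perp}$ from \eqref{eq:shapeops}, the $S(TM)$-component of $0=\bar{\nabla}_XZ$ reads $\widehat{\nabla}_X\overset{*}{Z}=\widehat{A}_{Z^\perp}X=0$ for all $X\in\Gamma(S(TM))$. Hence $\overset{*}{Z}$ is $\widehat{\nabla}$-parallel; in particular $|\overset{*}{Z}|$ is constant on each connected leaf, so $\overset{*}{Z}$ is nowhere zero there.

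Finally I would conclude flatness. Parallelism gives $\widehat{R}(X,Y)\overset{*}{Z}=0$, while on a surface the curvature operator has the form $\widehat{R}(X,Y)W=K(\hat{g}(Y,W)X-\hat{g}(X,W)Y)$; taking $X=\overset{*}{Z}$ and any $Y$ orthogonal to it yields $-K|\overset{*}{Z}|^2 Y=0$, forcing $K=0$. Since $\overset{*}{\nabla}=\widehat{\nabla}$ coincides with the Levi-Civita connection of each leaf (Remark \ref{rem:LC}), the vanishing of $K$ is exactly the flatness of $S(TM)$. I expect the only genuinely delicate step to be the passage from $A_{Z^\perp}\overset{*}{Z}=0$ to $A_{Z^\perp}\equiv 0$: it relies simultaneously on self-adjointness, on the trace hypothesis, and on $\dim S(TM)=2$, and it tacitly requires $\overset{*}{Z}\neq 0$, so the statement is best read as concerning the nontrivial constant-angle configuration.
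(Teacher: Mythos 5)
Your proof is correct, and while it rests on the same pillars as the paper's --- Corollary \ref{cor:AZ0} (constant angle plus $\varphi\equiv 0$ yields $A_{Z^\perp}\overset{*}{Z}=0$), the $g$-symmetry of $A^*_\xi$ and $A_N$ granted by screen integrability, the trace hypothesis, and $\mathrm{rank}\,S(TM)=2$ --- it organizes them in a genuinely different way. The paper never isolates the statement $A_{Z^\perp}\equiv 0$: it fixes the orthonormal frame $\{T,W\}$ with $T=\overset{*}{Z}/\vert\overset{*}{Z}\vert$ and shows one by one that $\overset{*}{\nabla}_TT$, $\overset{*}{\nabla}_TW$, $\overset{*}{\nabla}_WT$, $\overset{*}{\nabla}_WW$ all vanish (Corollary \ref{cor:geodesico} gives $\overset{*}{\nabla}_TT=0$; the trace hypothesis combined with $A_{Z^\perp}T=0$ gives $g(\overset{*}{\nabla}_WT,W)=0$; the rest is unitarity and orthogonality), concluding flatness from the existence of a parallel orthonormal frame. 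You instead make $A_{Z^\perp}\equiv 0$ the pivot, obtained by the two-dimensional spectral argument (symmetric, trace-free, nontrivial kernel), then read off from the codimension-two Gauss--Weingarten formula \eqref{eq:codim2} together with \eqref{eq:shapeops} (equivalently, Lemma \ref{eqcomponents}(a) with $\varphi=0$) that $\overset{*}{Z}$ is parallel along each leaf, and finish with the classical fact that a surface carrying a nowhere-vanishing parallel field has $K=0$. Your route bypasses Corollary \ref{cor:geodesico} altogether, shows exactly where two-dimensionality enters (the spectral step and the Gauss-curvature step), and makes explicit the nondegeneracy $\overset{*}{Z}\neq 0$ that the paper uses only tacitly (its $T$ is undefined otherwise); note that by Lemma \ref{ca:easy} and parallelism of $Z$ the norm $\vert\overset{*}{Z}\vert$ is constant on all of $M$, so the dichotomy $\overset{*}{Z}\equiv 0$ versus $\overset{*}{Z}$ nowhere zero is global, not leaf-by-leaf. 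What the paper's version buys in exchange is an explicit parallel frame; but in substance your $A_{Z^\perp}\equiv 0$ is exactly what its three inner-product computations verify componentwise in the frame $\{T,W\}$.
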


\begin{proof}
    Since $S(TM)$ is two dimensional, let $W\in\Gamma (S(TM))$ be a unitary vector field orthogonal to $T$. Thus, since $\mathrm{tr} A_{Z^\perp}\equiv 0$  we have that 
    \[
    g(A_{Z^\perp}T,T)+g(A_{Z^\perp}W,W) = 0.
    \]
 In order to prove that $S(TM)$ is flat, we will show that 
        \[
        \overset{*}{\nabla}_T T = \overset{*}{\nabla}_W T = \overset{*}{\nabla}_T W = \overset{*}{\nabla}_W W = 0.
        \]
 By Corollary \ref{cor:geodesico} we have that  $\overset{*}{\nabla}_T T=0$. Further, since $W$ is unitary we have $g(\overset{*}{\nabla}_TW,W)=0$, while $W$ being orthogonal to $T$ implies that
 \[
            0 = T \cdot \overline{g}(W,T)= \overline{ g } (\overset{*}{\nabla}_T T, W) + \overline{ g } (\overset{*}{\nabla}_T W, T) = \overline{ g } (\overset{*}{\nabla}_T W, T), 
            \]
            and consequently $\overset{*}{\nabla}_T W=0$.

Now recall that since $Z$ is parallel we have $\varphi =0$ and $\vert \overset{*}{Z}\vert$ is constant since $(M,g,S(TM))$ has constant angle (see Lemma \ref{ca:easy}).  Hence according to Lemma \ref{eqcomponents} we have
            \[
            \overset { * }{ \nabla } _ W T = \vert \overset{*}{Z}\vert\overset { * }{ \nabla } _ W \overset{Z}{*} =\vert \overset{*}{Z}\vert A_{Z^\perp}W
            \]
 Thus, since $\mathrm{tr} A_{Z^\perp}\equiv 0$ we have
            \[
            g(\overset { * }{ \nabla } _ W T, W) =
            \vert \overset{*}{Z}\vert g(A_{Z^\perp}W,W) =-\vert \overset{*}{Z}\vert g(A_{Z^\perp}T,T)=0,
\]
            where the last equality holds in virtue of Corollary \ref{cor:AZ0}. On the other hand, $T$ being unitary implies that $g  (\overset{*}{\nabla}_W T, T) = 0$ and therefore $\overset { * }{ \nabla } _ W T=0$.

          Finally, since $W$ is unitary we have $g (\overset{*}{\nabla}_W W, W)=0$ while orthogonality guarantees 
\[
0 = W \cdot \overline{g}(W,T)= \overline{ g } (\overset{*}{\nabla}_W W, T) + \overline{ g } (\overset{*}{\nabla}_W T, W) = \overline{ g } (\overset{*}{\nabla}_W W, T),
\]
which proves $\overset { * }{ \nabla } _ W W=0$. Thus, $S(TM)$ is flat as required.
        \end{proof}

For our final application we prove an analogue of Theorem 2.20 in \cite{GG} to the null context. A Lemma is in order.

\begin{lemma}\label{conditions}
Let $ ( M^3 , g, S ( T M ) ) $ be a three dimensional null hypersurface of $ (\bar{ M } ^ 4,\bar{g}) $, $ f $ a smooth function on $M$ and $\{X , Y\} $ an orthonormal frame field of $ S ( T M ) $. If $ \overset { * } { \nabla } _ X X = h X $ for a smooth function $h$, then $ f X $ is a CC on $ S ( T M ) $ if and only if $ f $ satisfies: 
\begin{align*}
Y \cdot f & = 0 \\
X \cdot f + h  f & = f g   ( \overset { * } { \nabla } _ Y X , Y  ) .
\end{align*}
\end{lemma}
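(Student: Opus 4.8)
The plan is to unwind the definition of a \emph{closed and conformal} vector field on the screen distribution $S(TM)$ equipped with its Levi-Civita connection $\overset{*}{\nabla}$ (which, by Remark \ref{rem:LC}, coincides with the induced screen connection on each integral leaf). A vector field $W$ on the two-dimensional Riemannian bundle $S(TM)$ is CC if there is a function $\mu$ with $\overset{*}{\nabla}_U W = \mu U$ for every $U\in\Gamma(S(TM))$; since $\{X,Y\}$ is an orthonormal frame, it suffices to test this condition against the two frame directions $U=X$ and $U=Y$ and to read off the $X$- and $Y$-components of each. So the first step is to write $W=fX$ and expand $\overset{*}{\nabla}_X(fX)$ and $\overset{*}{\nabla}_Y(fX)$ using the Leibniz rule, yielding
\[
\overset{*}{\nabla}_X(fX)=(X\cdot f)X + f\,\overset{*}{\nabla}_X X,\qquad
\overset{*}{\nabla}_Y(fX)=(Y\cdot f)X + f\,\overset{*}{\nabla}_Y X.
\]

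Next I would substitute the hypothesis $\overset{*}{\nabla}_X X = hX$ into the first line and decompose $\overset{*}{\nabla}_Y X$ into its $X$- and $Y$-components via the orthonormal frame, namely $\overset{*}{\nabla}_Y X = g(\overset{*}{\nabla}_Y X,X)X + g(\overset{*}{\nabla}_Y X,Y)Y$. Because $X$ has unit length, $g(\overset{*}{\nabla}_Y X,X)=\tfrac12 Y\cdot g(X,X)=0$, so in fact $\overset{*}{\nabla}_Y X = g(\overset{*}{\nabla}_Y X,Y)Y$. This simplification is the key computational observation: it forces the $Y$-derivative of $fX$ to have \emph{no} $X$-component, which is precisely what will pin down the first displayed equation.

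With these expansions in hand, the CC condition $\overset{*}{\nabla}_U(fX)=\mu U$ becomes two vector equations. Testing against $U=X$ gives $(X\cdot f + fh)X = \mu X$, so the conformal factor must be $\mu = X\cdot f + fh$, with no $Y$-component appearing automatically. Testing against $U=Y$ gives $(Y\cdot f)X + f\,g(\overset{*}{\nabla}_Y X,Y)\,Y = \mu Y$; matching the $X$-component forces $Y\cdot f = 0$, and matching the $Y$-component forces $\mu = f\,g(\overset{*}{\nabla}_Y X,Y)$. Equating the two expressions for $\mu$ then yields $X\cdot f + fh = f\,g(\overset{*}{\nabla}_Y X,Y)$, which is exactly the second stated equation. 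Conversely, if $f$ satisfies the two displayed conditions, one sets $\mu = X\cdot f + fh$ and verifies directly that $\overset{*}{\nabla}_X(fX)=\mu X$ and $\overset{*}{\nabla}_Y(fX)=\mu Y$, so $fX$ is CC.

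I do not anticipate a genuine obstacle here, as the argument is essentially a bookkeeping exercise in a two-dimensional frame. The only point requiring a little care is the identification of the conformal factor $\mu$: the CC condition must be read off \emph{componentwise} in the frame $\{X,Y\}$, and one must check that the single scalar $\mu$ obtained from the $X$-equation is consistent with the value forced by the $Y$-equation — the equality of these two expressions for $\mu$ is precisely the content of the second equation, so the two stated conditions together are exactly equivalent to the CC property. A minor subtlety worth flagging explicitly is the vanishing of $g(\overset{*}{\nabla}_Y X,X)$, which relies on $\{X,Y\}$ being orthonormal and on $\overset{*}{\nabla}$ being metric with respect to $\bar g|_{S(TM)}$ (established in Section \ref{sec2}); without this the $X$-equation and $Y$-equation would not decouple so cleanly.
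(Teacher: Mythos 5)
Your proposal is correct and follows essentially the same route as the paper's proof: expand $\overset{*}{\nabla}_X(fX)$ and $\overset{*}{\nabla}_Y(fX)$ by the Leibniz rule, use the hypothesis $\overset{*}{\nabla}_X X = hX$ together with metricity of $\overset{*}{\nabla}$ (which kills the $X$-component of $\overset{*}{\nabla}_Y X$), and match components against the CC condition $\overset{*}{\nabla}_U(fX)=\mu U$ in the frame $\{X,Y\}$. The paper compresses the final componentwise matching into ``the result follows at once,'' whereas you spell it out (including the consistency of the two expressions for $\mu$), which is the same argument made explicit.
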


\begin{proof}
Direct computations show that
	\begin{align*}
	\overset { * } { \nabla } _ X ( f X ) & = ( X \cdot f ) X + f \overset { * } { \nabla } _ X X  = ( X \cdot f + f \hat{ f } ) X , \\
	\overset { * } { \nabla } _ Y ( f X ) & = ( Y \cdot f ) X + f \overset { * } { \nabla } _ Y X  = ( Y \cdot f ) X + f   g  ( \overset { * } { \nabla } _ Y X , Y ) Y.
	\end{align*}
	The result follows at once. 
\end{proof}

\begin{theorem}\label{theoCMC}
Let $ ( M^3 ,g, S ( T M ) ) $ be a null hypersurface  of a four dimensional Lorentzian space form $ \bar{ M }^4 ( c ) $ of constant sectional curvature $c$ with vanishing null mean curvature.  If the CC vector field $Z\in\Gamma (T\bar{M})$ induces a canonical principal direction with respect to $A_\xi^*$  then $ 1 / \sqrt { \vert 2k^* \vert } T $ is closed and conformal on $ S ( T M ) $, where $ k^* $ is the principal curvature in the direction of $ T=\overset{*}{Z}/\vert \overset{*}{Z}\vert $. Moreover, $k^*$ is constant in directions tangent to $S(TM)$ but orthogonal to $T$.
\end{theorem}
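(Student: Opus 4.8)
The plan is to reduce the statement to an application of Lemma~\ref{conditions} for the orthonormal frame $\{T,W\}$ of the (integrable) screen, where $T=\overset{*}{Z}/\vert\overset{*}{Z}\vert$ and $W$ is a unit field orthogonal to $T$, with the choice $f=1/\sqrt{\vert 2k^*\vert}$. Since $A_\xi^*$ is symmetric (Proposition~\ref{prop:general}) and $T$ is principal with $A_\xi^* T=k^* T$, the vanishing null mean curvature $H=\mathrm{tr}\,A_\xi^*=0$ forces the remaining eigenvalue to be $-k^*$, so $A_\xi^* W=-k^* W$. Writing the connection data as $\overset{*}{\nabla}_T T=aW$ and $\overset{*}{\nabla}_W T=bW$ (the rest being fixed by $g(T,T)=g(W,W)=1$, $g(T,W)=0$), a direct expansion of $\overset{*}{\nabla}_T(fT)$ and $\overset{*}{\nabla}_W(fT)$ shows that $fT$ is closed and conformal on $S(TM)$, i.e. $\overset{*}{\nabla}_X(fT)=\mu X$, precisely when $a=0$, $W\cdot f=0$ and $T\cdot f=fb$. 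These are exactly the hypotheses of Lemma~\ref{conditions} with $X=T$, $Y=W$ and $h=0$.

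First I would establish the geodesic property $\overset{*}{\nabla}_T T=0$ (that is, $a=0$). From Lemma~\ref{eqcomponents}(a) one has $\overset{*}{\nabla}_X\overset{*}{Z}=\varphi X+A_{Z^\perp}X$ with $A_{Z^\perp}=\bar g(Z,N)A_\xi^*+\bar g(Z,\xi)A_N$ symmetric; pairing the case $X=T$ with $W$ and writing $\rho=\vert\overset{*}{Z}\vert$, $\alpha=\bar g(Z,\xi)$ gives $\rho a=\alpha\,g(A_N T,W)$. Thus $a=0$ is equivalent to $T$ being principal for $A_N$, hence for $A_{Z^\perp}$, at which point Corollary~\ref{cor:geodesico} delivers $\overset{*}{\nabla}_T T=0$. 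This is the step where the canonical principal direction hypothesis must be used in full strength, upgrading the $A_\xi^*$-principal condition to the $A_{Z^\perp}$-statement that feeds Corollary~\ref{cor:geodesico}.

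Next I would exploit the space form hypothesis. Because $\bar g(\bar R(X,Y)Z,\xi)=c\big(\bar g(Y,Z)\bar g(X,\xi)-\bar g(X,Z)\bar g(Y,\xi)\big)=0$ for all $X,Y,Z\in\Gamma(S(TM))$, the Codazzi equation~\eqref{eq:Codazzi} evaluated at $(T,W,W)$ and $(W,T,T)$, using $B(T,T)=k^*$, $B(W,W)=-k^*$, $B(T,W)=0$, collapses to
\[
T\cdot k^*=-2bk^*-k^*\tau(T),\qquad W\cdot k^*=2ak^*-k^*\tau(W).
\]
With $a=0$ already in hand, the two remaining conditions $W\cdot f=0$ (equivalently $W\cdot k^*=0$, the asserted constancy of $k^*$ orthogonal to $T$) and $T\cdot f=fb$ (equivalently $T\cdot k^*=-2bk^*$) hold exactly when $\tau(T)=\tau(W)=0$. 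I would secure this by normalizing the null frame so that $\tau$ vanishes on $S(TM)$: in a space form the Ricci/structure equation for the transversal connection makes $d\tau(T,W)$ a multiple of $g(A_N T,W)$, which vanishes once $a=0$, so $\tau\vert_{S(TM)}$ is closed along the leaves of the screen and can be gauged away by a rescaling $\xi'=f_0\xi$, $N'=N/f_0$ as in Remark~\ref{rem:gauge}. Feeding $\tau(T)=\tau(W)=0$ back into the two relations and invoking Lemma~\ref{conditions} finishes the proof, while $W\cdot k^*=0$ gives the ``moreover'' claim.

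The hard part is the second step. Unlike the non-degenerate setting of \cite{GG}, the extrinsic data here split into $A_\xi^*$ and $A_N$, so the principal condition for $A_\xi^*$ does not by itself make $T$ geodesic; one must first show $T$ is principal for $A_N$ (equivalently $\rho a=\alpha\,g(A_N T,W)=0$), which is what genuinely justifies passing to $A_{Z^\perp}$ and applying Corollary~\ref{cor:geodesico}. Controlling the extra one-form $\tau$, which has no analogue in the classical theory, is the companion difficulty, and it is exactly the vanishing of the ambient curvature terms in a space form that allows $\tau$ to be normalized to zero on the screen.
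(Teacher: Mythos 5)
Your proposal follows the same skeleton as the paper's proof: the orthonormal frame $\{T,W\}$ with $A^*_\xi W=-k^*W$ forced by symmetry of $A^*_\xi$ and $H=0$, the geodesic property of $T$, two Codazzi equations in the space form, and the final appeal to Lemma~\ref{conditions} with $X=T$, $Y=W$, $h=0$. Your Codazzi computations are correct. The genuine gap is exactly the step you yourself flag as ``the hard part'': the geodesic property $\overset{*}{\nabla}_TT=0$ (your $a=0$) is never actually proved. From Lemma~\ref{eqcomponents}(a) you correctly obtain $\rho a=\alpha\,g(A_NT,W)$, so that $a=0$ is \emph{equivalent} (when $\bar g(Z,\xi)\neq 0$) to $T$ being principal for $A_N$, hence for $A_{Z^\perp}$; but you then invoke Corollary~\ref{cor:geodesico}, whose hypothesis is precisely that $T$ is principal for $A_{Z^\perp}$ --- that is, precisely the statement you have just shown to be equivalent to the one you want to prove. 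Nothing in the proposal derives that statement from the theorem's stated hypothesis (that $T$ is principal for $A^*_\xi$); declaring that the hypothesis ``must be used in full strength'' is not an argument. For comparison, the paper short-circuits this point: it applies Corollary~\ref{cor:geodesico} outright to get $\overset{*}{\nabla}_TT=\overset{*}{\nabla}_TW=0$, reading its canonical-principal-direction hypothesis as feeding the corollary; your reformulation makes the mismatch between $A^*_\xi$ and $A_{Z^\perp}$ visible, but then runs in a circle rather than closing it.

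Your handling of $\tau$ is, to your credit, more careful than the paper's: the paper expands Eq.~\eqref{eq:Codazzi} without the terms $\tau(X)B(Y,Z)-\tau(Y)B(X,Z)$, whereas your equations $T\cdot k^*=-2bk^*-k^*\tau(T)$ and $W\cdot k^*=2ak^*-k^*\tau(W)$ are the correct ones, and they show that (once $a=0$) the two conditions of Lemma~\ref{conditions} hold exactly when $\tau\vert_{S(TM)}=0$. However, your proposed repair --- regauging $\xi^\prime=f_0\xi$, $N^\prime=N/f_0$ to kill $\tau$ on the screen --- does not yield the stated theorem, because the conclusion is not gauge-invariant: under that rescaling $A^*_{\xi^\prime}=f_0A^*_\xi$, so the principal curvature becomes $f_0k^*$, and what you would prove is that $\bigl(1/\sqrt{\vert 2f_0k^*\vert}\bigr)T$ is closed and conformal, which differs from the assertion about $k^*$ by the non-constant factor $1/\sqrt{f_0}$; multiplying a closed conformal field along the leaves by a function that is non-constant in the $W$-direction destroys the property. (Your claim that $d\tau(T,W)$ is a multiple of $g(A_NT,W)$ in a space form is true --- it follows from the structure equation for $\bar g(\bar R(X,Y)\xi,N)$ together with symmetry of $A_N$ on the integrable screen --- but that identity is not established in the paper, and it anyway presupposes $a=0$, i.e.\ the unproven step.) So as written the proposal has two holes: the circular geodesic step, and a normalization of $\tau$ that silently changes $k^*$ and hence the statement being proved.
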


\begin{proof}
Let $\{T,W\}$ be an orthonormal frame of $S(TM)$. Since $A_\xi^*$ is symmetric, by a standard argument $W$ is also a principal direction of $A_\xi^*$. The null mean curvature hypothesis readily implies  $A_\xi^*W=-k^*W$.

Now, since $\{T,W\}$ are orthonormal, by Corollary \ref{cor:geodesico} we have 
$g(T,{\overset{*}{\nabla}}_TW)=-g({\overset{*}{\nabla}}_TT,W)=0$. Since $W$ is unitary, this in turn implies ${\overset{*}{\nabla}}_TW=0$.

Now recall that in a spaceform the curvature endomorphism satisfies
\[
\bar{R}(X,Y)U =\bar{g}(X,U)Y-\bar{g}(Y,U)X
\]
hence by Codazzi equation (refer to Eq. \eqref{eq:Codazzi}) we have

\begin{align*}
0 & = \bar{ g } ( R ( T , W ) T , \xi ) \\
& = T ( B ( T , W ) ) - W ( B ( T , T ) ) - B ( \nabla _ T W , T ) - B( W, \nabla_T T)+ 2B ( \nabla _ W T , T ) \\
& = T ( B ( T , W ) ) - W ( B ( T , T ) ) -  { g } ( A ^ * _ \xi ( T ) , \overset{*} {\nabla}_T W) - \overline { g } ( A ^ * _ \xi ( W ) , \overset{*} {\nabla}_T T)+ 2 { g } ( A ^ * _ \xi ( T ) , \overset{*} {\nabla}_W T)\\
& = T ( B ( T , W ) ) - W ( B ( T , T ) ) + 2 k^* { g } ( T , \overset{*} {\nabla}_W T)\\
& = T \cdot  { g } ( A ^ * _ \xi  T  , W ) - W \cdot  { g } ( A ^ * _ \xi  T  , T ) \\
& = - W \cdot k^*,
\end{align*}
which establishes the second claim of the theorem.

In order to prove the first assertion, we rely on Lemma \ref{conditions} and prove that $h:=0$, $ f : = 1 / \sqrt { \vert k^* \vert } $, $ X : =  T $ and $ Y : = W $ satisfy the hypothesis of the Lemma.  Notice that with these choices, $W\cdot k^*=0$ immediately yields $Y\cdot f=0$ and the first condition of Lemma \ref{conditions} holds. 
	
To verify the second condition of the Lemma, we apply Codazzi equation once again to obtain 
	\begin{align*}
	0 & = \overline { g } ( R ( T , W ) W , \xi ) \\
	& = T ( B ( W , W ) ) - W ( B ( T , W ) ) - B ( \nabla _ T W , W )\\
 &\qquad\qquad - B ( W , \nabla _ T W ) + B ( \nabla _ W  T , W ) + B ( T , \nabla _ W W ) \\
	& = T \cdot g( A ^ * _ \xi  W  , W ) + g( A ^ * _ \xi  W  , \overset { * } { \nabla } _ W T ) + g( A ^ * _ \xi  T  , \overset { * } { \nabla } _ W W ) \\
	& =- T \cdot k^* - g( k^* W , \overset { * } { \nabla } _ W T ) + g( k^* T , \overset { * } { \nabla } _ W W ) \\
	& = -T \cdot k^* - 2k^* g( W , \overset { * } { \nabla } _ W T ).
	\end{align*}
	Thus
\[
-\frac{T \cdot k^*}{2k^*} = g( W , \overset { * } { \nabla } _ W T ),
\]	
 so we have, where $\sigma =\textrm{sign}k^*$,
	\[
	T \cdot f  = - \frac { 1 } { 2 } \frac {  T \cdot ( \vert k^*\vert ) } { ( \vert k^*\vert ) ^ { 3 / 2 } }  = - \frac { 1 } { 2 } \frac { \sigma T \cdot (  k^* ) } { \sigma k^* ( \vert k^*\vert ) ^ { 1 / 2 } }= f   g( Y , \overset { * } { \nabla } _ Y X  )
	\]
	which establishes the second condition of Lemma \ref{conditions} thus completing the proof.
\end{proof}

\section*{Acknowledgments}

 S. Chable-Naal recognizes the support of Conacyt under the Becas Nacionales program (793510). M. Navarro was partially supported by Conacyt SNI  25997. D. Solis was partially supported by Conacyt SNI 38368. M. Navarro y D. Solis were partially supported by grant UADY-FMAT PTA 2023.

\medskip


\begin{enumerate}
\item \textsc{Samuel Chable-Naal. Facultad de Matem\'aticas. Universidad Aut\'onoma de Yuct\'an, Perif\'erico Norte 13615. M\'erida, M\'exico. } samuelchable@alumnos.uady.mx

\item \textsc{Matias Navarro. Facultad de Matem\'aticas. Universidad Aut\'onoma de Yuct\'an, Perif\'erico Norte 13615. M\'erida, M\'exico. } matias.navarro@correo.uady.mx

\item \textsc{Didier A. Solis. Facultad de Matem\'aticas. Universidad Aut\'onoma de Yuct\'an, Perif\'erico Norte 13615. M\'erida, M\'exico. }  didier.solis@correo.uady.mx

\end{enumerate}


\begin{thebibliography}{99}



\bibitem{aguilar}
 Aguilar-Su\'arez, R. and Ruiz-Hern\'andez, G., \textit{Lagrangian surfaces of constant angle in the complex Euclidean plane},
  Bol. Soc. Mat. Mex., 28(70), (2022)
  

\bibitem{Ando}
  Ando, N. \textit{A Surface which has a family of geodesics of curvature}, Beitr. Algebra Geom., 48(1), 237-256, (2007)


\bibitem {MR2039644}
Atindogbe, C. and Duggal, K.L. \textit{Conformal screen on lightlike hypersurfaces},  Int. J. Pure Appl. Math., 11(4) 421-442, (2004)


\bibitem{CiriacoBenji}
Atindogbe, C. and Olea, B.. \textit{Conformal vector fields and null hypersurfaces},  Results Math., 77(129), (2022)

\bibitem{CiriacoConforme}
 Atindogbe, C. and Mbiakop, K.T. \textit{Normalized null hypersurfaces in Lorentzian manifolds admitting conformal vector fields}, Diff,  Geom. Dyn. Syst., 23(11) 1-17, (2021)


\bibitem{BG}
  Brander, D. and Gravesen, J., \textit{Monge surfaces and planar geodesic foliations}.
 J. Geom., 109(4), (2018)



\bibitem{CecilRyan}
  Cecil, T.E. and Ryan, P. J. \textit{Geometry of Hypersurfaces}, Springer, (2015)
 

\bibitem{chen04}
Chen, B.-Y., \textit{A simple characterisation of Generalised Robertson-Walker spacetimes}, Gen. Relativ. Gravit., 46 (2014).


\bibitem{chen00}
 Chen B.-Y., \textit{Pseudo-Riemannian Geometry, $\delta$-Invariants and Applications}, World Scientific, (2011)



\bibitem{Dchen}
  Chen, D., Chen, G., Chen, H. and Dillen, F. \textit{Constant angle surfaces in $\mathbb{S}^3\times\mathbb{R}$},
  Bull. Belg. Math. Soc. Simon Stevin, 19, 289-304, (2012)

\bibitem{chen01}
  Chen, B.-Y. and Deshmukh, S. \textit{Euclidean submanifolds with conformal canonical vector field}. Bull. Korean Math. Soc., 55(6) 1823-1834, (2018)


\bibitem{DajczerTojeiro}
  Dajczer, M. and Tojeiro, R. \textit{Submanifold Theory: Beyond an Introduction}. Springer (2019)


\bibitem{Dillen01}
  Dillen, F., Fastenakels, J., Van der Veken, J. and Vrancken, L. \textit{Constant angle surfaces in $\mathbb{S}^2\times\mathbb{R}$}. Monatsh. Math. 152, 89-96, (2007)


\bibitem{Dillen10}
  Dillen, F., Fastenakels, J., Van der Veken, J. and Vrancken, L. \textit{Surfaces in $\mathbb{S}^2\times\mathbb{R}$ with a canonical principal direction}. Ann. Global  Anal. Geom. 35, 381-396, (2009)


\bibitem{Dillen00}
  Dillen, F., Munteanu, M.I., Van der Veken, J. and Vrancken, L. \textit{Classification of constant angle surfaces in a warped product}. Balkan J. Geom. Appl. 16(2), 35-47, (2011)
 

\bibitem{Dillen02}
 Dillen, F. and Munteanu, M.I. \textit{Constant angle surfaces in $H^2\times \mathbb{R}$}. Bull. Braz. Math. Soc. (N. S.)",
  40(1), 85-97, (2009)
 

\bibitem {MR1383318}
 Duggal, K.L. and Bejancu, A., \textit{Lightlike submanifolds of semi-Riemannian manifolds and
              applications}, Mathematics and its Applications, vol. 264, Kluwer Academic  (1996)
   

\bibitem{MR2598375}
 Duggal, K. L. and Sahin, B. \textit{Differential Geometry of Lightlike Submanifolds.} Frontiers in Mathematics, Birkh\"auser, (2010)
    

\bibitem{FuNistor}
  Fu, Y. and Nistor, A.I. \textit{Constant angle property and canonical principal directions for surfaces in $\mathbb{M}^2(c)\times\mathbb{R}_1$}. Mediterr.  J. Math. 10, 1035-1049, (2013)
 

\bibitem{FuYang}
  Fu, Y. and Yang, D. \textit{On constant slope spacelike surfaces in 3-dimensional Minkowski space}, J. Math. Anal. Appl., 285, 208-220, (2012)
 

 
\bibitem{GG}
 Garcia Dinorin, A. and Ruiz-Hern\'andez, G. \textit{Semi Riemannian hypersurfaces with a canonical principal direction}. Bol. Soc. Mat. Mexicana. 27(52) (2021)



\bibitem {MR3013428}
 Garnica, E., Palmas, O. and Ruiz-Hern{\'a}ndez,  G.
     \textit{Classification of constant angle hypersurfaces in warped products via eikonal functions}. Bol. Soc. Mat. Mex., 18(1), 29-41, (2012).



\bibitem{MR2966642}
  Garnica, E.,  Palmas, O. and Ruiz-Hern{\'a}ndez,  G.  \textit {Hypersurfaces with a canonical principal direction}, Diff. Geom. Appl., 30(5), 382-391, (2012)




\bibitem{Gelocor2}
  Gutierrez, M. and Olea, B. \textit{Null hypersurfaces and the rigged metric.} in  Developments in Lorentzian Geometry, Albujer, A. et al Eds. 129-142. Springer, (2022)




\bibitem{MR0424186}
   Hawking, S . W. and Ellis, G. F. R.,\textit{The large scale structure of space-time}, Cambridge Monographs on Mathematical Physics, vol. 1, Cambridge University Press, (1973)
  

\bibitem{MR1172768}
 d'Inverno, R., \textit{Introducing Einstein's Relativity}, Clarendon Press (1992).



\bibitem{KUHNEL}
Kuhnel, W. and Rademacher, H.-B. \textit {Conformal vector fields on pseudo-Riemannian spaces}, Diff. Geom. Appl., 7(3) 237-250 (1997).



\bibitem{Manfio}
  Manfio, F., Tojeiro, R. and Van der Veken, J. \textit{Geometry of submanifolds with respect to ambient vector fields.}
  Annali di Matematica, 199, 2197-2225, (2020)


\bibitem{MunteanuNistor01}
  Munteanu, M.I. and Nistor, A.I. \textit{Complete classification of surfaces with a canonical principal direction in the Euclidean space $\mathbb{E}^3$.} Cent. Eur. J. Math., 9(2), 378-389", (2011)


\bibitem{MunteanuNistor02}
  Munteanu, M.I. and Nistor, A.I. \textit{Surfaces in $\mathbb{E}^3$ making constant angle with Killing vector fields.} Int. J. Math.,
  23(6), 1250023, (2012)


\bibitem {Gelocor}
    Navarro, M., Palmas, O. and Solis, D. A. \textit{Geometry of null hypersurfaces in Lorentzian space forms} in  Developments in Lorentzian Geometry, Albujer, A. et al Eds. 257-272. Springer, (2022)


\bibitem {MR4135826}
 Navarro, M., Palmas, O. and Solis, D. A.,  \textit {Null screen quasi-conformal hypersurfaces in semi-Riemannian
              manifolds and applications}, Math. Nachr., 293(8) 1534-1553, (2020).


\bibitem{MR3874677}
 Navarro, M., Palmas, O. and Solis, D. A., \textit {Null screen isoparametric hypersurfaces in Lorentzian space
              forms, Mediterr. J. Math}, 15(6) 215, (2018)
 


\bibitem{NRS01}
  Navarro, M.,  Ruiz-Hern\'andez, G. and  Solis, D.   \textit{Constant mean curvature hypersurfaces with constant angle in semi-Riemannian space forms.} Diff. Geom. Appl. (49), 473-495, (2016)



\bibitem {MR719023}
 O'Neill, B.   \textit {Semi-{R}iemannian geometry, with applications to relativity}, Pure and App. Math., vol 103. Academic Press, (1983)


\bibitem{Onnis01}
  Onnis, I.I. and Piu, P., \textit{Constant angle surfaces in the Lorentzian Heisenberg group}. Arch. Math., 109, 575-589, (2017)


\bibitem{Onnis02}
  Onnis, I.I., Passos Passamani, A. and Piu, P. \textit{Constant angle surfaces in Lorentzian Berger spheres.} J. Geom. Anal.,
  29, 1456-1478, (2019)


\bibitem{Bernoulli}
  Speiser, D. and Weil, A. and Mattmuller, M., \textit{Die Werke von Jakob Bernoulli: Bd. 5:Differentialgeometrie.}  Birkauser,
  (1999)





\bibitem{Samuel}
    Ssekajja, S., \textit{Lightlike hypersurfaces in spaces with concircular fields}, Arab. J. Math., 10(3) 699-710, (2021)
  

\bibitem {MR757180}
 Wald, R.M., \textit {General Relativity}, University of Chicago Press (1984),
    

    



\end{thebibliography}
\end{document}